\documentclass[10pt,a4paper]{article}

\usepackage[utf8]{inputenc}
\usepackage[left=3cm, right=2.5cm, top =2.5cm, bottom = 2.5cm]{geometry}

\usepackage[ngerman, english]{babel}
\usepackage[utf8]{inputenc}	
\usepackage[T1]{fontenc}	
\usepackage{csquotes} 

\usepackage{pdflscape} 

\usepackage{hyperref}

\usepackage{algorithm}
\usepackage{algpseudocode}

\usepackage{enumitem}

\usepackage{graphicx}
\usepackage[export]{adjustbox}
\usepackage{subcaption} 
\usepackage{rotating} 

\usepackage{verbatim}

\usepackage{amsmath,	
	amssymb,			
	amsthm,				
	bbm, 				
	mathtools} 			
\usepackage{mathrsfs}
\usepackage{dsfont} 

\usepackage{color}

\usepackage{scrlayer-scrpage}

\usepackage{cancel}

\usepackage{multirow}
\usepackage[backend=biber, style = alphabetic]{biblatex}

\usepackage{todonotes}

\usepackage{tikz-cd}
\usetikzlibrary{intersections}
\usetikzlibrary{babel}

\newcommand{\Z}{\mathbb Z}
\newcommand{\N}{\mathbb N}

\newcommand{\R}{\mathbb R}

\newcommand{\uprd}{\,\mathrm{d}}

\newcommand{\abs}[1]{\left\lvert#1\right\rvert}
\newcommand{\norm}[1]{\left\lVert#1\right\rVert}

\newcommand{\SP}[2]{\left\langle #1 , #2\right\rangle }
\newcommand{\set}[2]{\left\lbrace#1 \vphantom{#2}\right|\left. \vphantom{#1}#2\right\rbrace}

\newcommand\restr[2]{{
  \left.\kern-\nulldelimiterspace 
  #1 
  \vphantom{\big|} 
  \right|_{#2} 
  }}

\newtheoremstyle{break}
  {\topsep}{\topsep}%
  {\itshape}{}%
  {\bfseries}{}%
  {\newline}{}%
\theoremstyle{break}
\newtheorem{thm}{Theorem}

\newtheorem{bem}[thm]{Remark}

\newtheorem{lem}[thm]{Lemma}

\newtheorem{ass}[thm]{Assumption}
\newtheorem{example}{Example}[]

\newcommand{\nameMethode}{double discrepancy principle}
\newcommand{\nameRegmethode}{double regularisation}

\title{Regularisation and Parameter Choice Rules for Composite Linear Inverse Problems}

\author{Christina Brandt$^1$
, Patrick Horn$^1$
}
\date{
\scriptsize $^1$ University of Greifswald\\
Institute for Mathematics \& Computer Science\\
Walter-Rathenau-Straße 47, 17487 Greifswald\\
}
\begin{document}

\maketitle

\begin{abstract}
\noindent
In this paper we propose a novel regularisation strategy for structured linear inverse problems of the form $Ax=F \circ G x = z$. Instead of using the operator $A$ or a complex scheme involving both $F$ and $G$ simultaneously, combining already existing and simpler regularisations for $F$ and $G$ allows the use of established and optimised programs for high-speed computations. As classical parameter choice rules fail in the scenario when no source conditions are known because intermediate error bounds may be unavailable, we provide both a-priori and a-posteriori rules with a convergence speed analysis given source conditions.
We test the proposed method on the pre-clinical medical imaging technique of magnetic particle imaging with different choices of regularisation 
and on a convolution problem for an analysis of the convergence speed.
\end{abstract}

\section{Introduction}
Considering a linear inverse problem, we are interested in solving the operator equation
\begin{equation}\label{eqn:OpEq}
Ax = z,
\end{equation}
with a linear map $A : X \to Z$ between  Hilbert spaces $X$ and $Z$. However, the right hand side $z$ is not readily available
but only 
a noisy version $z^\delta$ with $\norm{z-z^\delta}\leq \delta$ with 
noise level  $\delta$. The operator is often times ill-posed and  naive solutions are thus  heavily corrupted by the noise. Regularisation methods are necessary to stabilise the inversion process. Therefore, the reconstruction error  is split in two parts resulting in 
	the data error 
	and the approximation error. 
It is  necessary to balance both types of error against each other. 
Classical regularisation approaches include variational  or iterative methods. Variational methods are based on minimisation  of functionals  consisting of   
	a data term which ensures that the minimising solution solves the operator equation \eqref{eqn:OpEq} approximately and a regularisation term penalising undesired behaviour such as oscillations or enforcing desired behaviour like sharp edges, depending on the application. Machine learning can be included in modern approaches, see for example \cite{Arridge+_2019} for an overview and further \cite{ZhangPNP, HuraultChambollePNP, Cannas+_2026, RomanoRED}. Regularisation methods include a parameter describing how 
	well the unstable inverse of $A$ is approximated and which therefore influence data  and approximation error. For high-quality reconstructions it is not only important to choose an appropriate regularisation, but also to pick the regularisation parameter properly in dependence of the noise level. Additional information about the forward operator can be used to choose or design the regularisation, for example by including PDE constraints on the solution \cite{Westen+_2024, Abdulla+_2025, Moon+_2026}.

In this manuscript, we assume that the operator $A$
\begin{equation}\label{eqn:OpSplitting}
A = G \circ F
\end{equation}
is composed of two linear maps  $F: X \to Y$ and $G : Y \to Z$ with an additional Hilbert space $Y$. This type of additional information was already used to design regularisation terms, see for example \cite{Westen+_2024} or \cite{Burger_2023}. Instead of applying one adapted but complicated regularisation scheme, we want to split the regularisation process into multiple regularisation steps which may be easier to compute, i.e. we want to design regularisations for the operator equation~\eqref{eqn:OpEq} using regularisations for the operators $F$ and $G$.

As already mentioned above, not only the choice of regularisation is important, but also the way to choose the regularisation parameter. For the most extreme cases, where no error bound exists,  heuristic parameter choice rules such as the L-curve method \cite{HansenOleary_1993, KindermannRaik_2020} or the Hanke-Raus rule \cite{HankeRaus_1996, Jin_2016} can be used. Even though they perform well in practise, as they do not depend on the noise level, they don't stabilise the reconstruction in the classical sense and cannot form a regularisation scheme. Nevertheless, heuristic methods can be analysed 
and  convergence to the ground truth can be guaranteed \cite{Kindermann_2011, KindermannRaik_2019, Bakushinsky_1984}. On the other hand, given an error bound and additional smoothness information about the ground truth, it is  possible to design parameter choice rules behaving optimally. 
Without this smoothness information on the ground truth, it is still possible to use a-posteriori  parameter choice rules as the Morozov discrepancy principle \cite{Morozov_1968} which then performs suboptimally. 

In this study, we extend  classical results based on filters to the presented case with a composite operator. We will first show that under mild assumptions, given smoothness information,  classical parameter choice rules again result in optimal methods. Moreover, we analyse the situation without smoothness information. Here, it is not possible to directly use the Morozov method as for the second operator equation no error estimate is known. We propose an adapted discrepancy principle and show the order of convergence.

We use 
magnetic particle imaging (MPI) as a test application to evaluate our approach. MPI is a novel, pre-clinical medical imaging modality introduced in 2005  \cite{GleichWeizenecker_2005}. Here, a magnetisable tracer is injected in the blood flow of a patient which reacts to an applied dynamic magnetic field. Its non-linear response is measured allowing reconstruction of the spatial distribution of the tracer.
Naturally, different settings in the magnetic field result in different forward operators. Applying particular 
magnetic fields, the forward operator is given by a composition of convolution $G$ and  Radon transform $F$  \cite{Knopp_2011}, which was used in \cite{Weinmann+_2025} to construct a regularisation scheme.

Of course, the operator splitting is not limited to MPI.  
Another application which is given as a composite operator is the image zooming operator, i.e. the change of resolution in a picture. It can be interpreted as a concatenation of a blurring operator and a subsample operator \cite{EbrahimiVrscay_2007}. In medical imaging, it is sometimes desired to measure less data, for example if the measuring process is destructive in some sense. The problem can be interpreted as the usual operator $F$  followed by an operator $G$ forgetting some part of the measurement. Examples include compressed sensing in MRI \cite{YeMRICS}, limited angle tomography \cite{Tovey+_2019} and low-count PET \cite{WangQi_2016}.
Other pre-processing steps as artefact removal in the measurement-space \cite{Lequan+_2021} or noise-reduction \cite{HebertGopal_1994} fit this splitting as well. This corresponds to a concatenation of the measurement-operator $F$ and the identity operator $G = \mathrm{id}$.
Depending on the application, not only  a reconstructed tomographic image  but also  a segmentation is sought. This means, we want to reconstruct a solution $x^\dagger \in \tilde X$ solving the inverse problem \eqref{eqn:OpEq}, where $\tilde X \subset X$. For example, if $X$ denotes a space of pixelated images, $\tilde X$ could be  the corresponding space where each pixel value has to be equal to some pre-defined values. In total, we can interpret this as a inverse problem of the above form, where $F$ is the embedding $\tilde X \hookrightarrow  X$ and $G$ is the measurement operator, see for example \cite{UNetRonneberger+_2015} or  \cite{KuopioChallenge2023}.
General post-processing can be applied to blurry images as in \cite{HamiltonHauptmann_2018}.

In this manuscript, we investigate  regularisations tailored to the above-mentioned structure of the operator equation. First, we present sufficient conditions such that the composition of regularisations possesses the regularisation property in Subsection~\ref{subsec:RegProperty}. Afterwards in Subsection~\ref{sec:convSpeed}, we analyse the convergence speed of the method given source conditions for two cases. 
We investigate the case where only one operator $F$ or $G$ is ill-posed and  the case where both operators are ill-posed, proposing a-priori and a-posteriori parameter choice rules.
Section~\ref{sec:NumExp} consists of numerical experiments testing qualitative and quantitative performance, once on simulated MPI data (Subsection~\ref{sec:NumerikMPI}) and on a deconvolution task (Subsection~\ref{sec:NumerikConv}). We conclude our findings in Section~\ref{sec:Conclusion}.

\section{Regularisation of Composite Operator Equations}\label{sec:RegComp}

In the following, all occurring spaces are Hilbert spaces if not otherwise mentioned. In the case of an ill-posed operator $A: X \to Z$ in the operator equation~\eqref{eqn:OpEq}, solutions may not exist or may not be unique. Nevertheless, to achieve some reconstruction, a minimum-norm solution $x^\dagger$ to the least-squares problem $x^\dagger = \min_x \norm{Ax-z}^2$ can be computed with help of the Moore-Penrose inverse $A^\dagger$, i.e. the solution is given by $A^\dagger z$. In practise, the true data $z$ is not available but only a noisy version $z^\delta$ with $\norm{z-z^\delta}<\delta$. Thus, in the noise-free case, a reconstruction via $x^\dagger := A^\dagger z$ is possible. In the case of noise, this formula may not be applied, as the Moore-Penrose inverse is unbounded for ill-posed operator equations. 
Instead, regularisation is necessary for the stabilisation of the inversion.  Given the ill-posed operator equation \eqref{eqn:OpEq}, a map  $\alpha: \R_{> 0}\times Z \to \R_{> 0}$ and a family of operators $\left\{ R_\alpha \right\}_{\alpha}$ , the tuple $(\{R_\alpha\}_\alpha, \alpha)$ is said to be a regularisation for the pseudo-inverse  $A^\dagger$ with parameter choice rule $\alpha$ if 
\begin{enumerate}
	\item $\sup \set{ \norm{R_{\alpha(\delta, z^\delta)}z^\delta - A^\dagger z} }{ z^\delta \in Z, \norm{z - z^\delta}\leq \delta}\rightarrow 0$ as $\delta \searrow 0$,

\item	$\sup \set{ \alpha(\delta, z^\delta) }{ z^\delta \in Z, \norm{z-z^\delta} \leq \delta}\rightarrow 0$ as $\delta \searrow 0$.
\end{enumerate}
See definition 3.1 in  \cite{EHN_1996}.

\subsection{Regularisation Property of the double regularisation}\label{subsec:RegProperty}
We consider the forward operator $A$ of the form
	\[A = G \circ F\]
	with $F: X \to Y$ and $G : Y \to Z$. Assuming all operators to be invertible  results in 
	$A^{-1} = F^{-1} \circ G ^{-1}$. Introducing regularisations $R^G_\alpha$ and $R^F_\gamma$ for $G^\dagger$ and $F^\dagger$ as approximations to the inverse operators $G^{-1}$ and $F^{-1}$ respectively, motivates the regularisation approach $R^A = R^F_\gamma \circ R^G_\alpha$, which converges to the concatenation of the inverses, i.e. $R^F_\gamma \circ R^G_\alpha \to F^{-1} \circ G^{-1}$ for $\gamma, \alpha \to 0$. However, in general with non-invertible $F$ and $G$,  we have $A^\dagger \neq F^\dagger \circ G^\dagger$ and we cannot expect any convergence to the pseudo-inverse of $A$. Sufficient and necessary conditions for $A^\dagger = F^\dagger \circ G^\dagger$ are given in \cite{Ben-Israel_2003, Greville_1966}.
We thus assume $A^\dagger = F^\dagger \circ G ^\dagger$  to obtain the following result.

\begin{thm}\label{thm:compRegIsReg}
	Let $A$, $F$ and $G$ be operators between Hilbert spaces $X$, $Y$ and $Z$ as defined above. Moreover, let $\mathfrak Z \subset Z$ be a subset of $Z$ such that for all $z\in \mathfrak Z \subset Z$ there exists a minimum-norm solution $y^\dagger\in Y$ to the operator equation $Gy =z$ and a minimum-norm solution $x^\dagger \in X$ for the operator equation $Fx = y^\dagger$.
	Assume there exists a parameter choice rules $\gamma: \R_{>0} \times Z \to R_{>0}$ 
	with 
	\[\sup \set{\gamma(\delta, z^\delta)}{\norm{z-z^\delta}\leq \delta} \overset{\delta \to 0}{\longrightarrow} 0\]
	and a parameter choice rule $\alpha: \R_{>0} \times Z \to \R_{>0}$ such that
	\begin{equation}\label{eqn:CondcompReg}
		\sup \set{ \norm{R^F_{\gamma(\delta, z^\delta)}}\norm{y^\dagger - R^G_{\alpha(\delta, z^\delta)}z^\delta}_Y }{ \norm{z-z^\delta}\leq \delta} \overset{\delta \to 0}{\longrightarrow} 0\,.
	\end{equation}
	Then the concatenation $R^F \circ R^G$ is a regularisation for  $\restr{A^\dagger}{\mathfrak Z}$, i.e.
	\[\sup \set{ \norm{x^\dagger - R^F_{\gamma(\delta, z^\delta)} R^G_{\alpha(\delta, z^\delta)}z^\delta}_X }{ \norm{z-z^\delta}\leq \delta, z \in \mathfrak Z} \overset{\delta \to 0}{\longrightarrow} 0\,.\]
	\begin{proof}
		In the following, we will abbreviate $\gamma(\delta, z^\delta)$ with $\gamma$. As $R_\gamma^F$ defines a regularisation for $F^\dagger$ and $y^\dagger$ denotes the noise-free right hand side in $Fx = y^\dagger$,
		it holds \[\norm{x^\dagger - R^F_\gamma y^\dagger}_X\overset{\gamma \to 0}{\longrightarrow} 0\,.\]   
		Further, by the regularisation property of $R^G$
		\[\norm{y^\dagger - R^G_{\alpha(\delta, z^\delta)}z^\delta}_Y \overset{\delta \to 0}{\longrightarrow} 0\]
		holds. The   
		assumption on $z \in \mathfrak Z$  implies 
		\begin{align*}
			\norm{x^\dagger - R^F_\gamma R^G_{\alpha(\delta, z^\delta)}z^\delta}_X&\leq \norm{x^\dagger - R^F_\gamma y^\dagger}_X  + \norm{R^F_\gamma\left( y^\dagger - R^G_{\alpha(\delta, z^\delta)}z^\delta\right) }_X\\
			&\leq \norm{x^\dagger - R^F_\gamma y^\dagger}_X + \norm{R^F_\gamma}\norm{y^\dagger - R^G_{\alpha(\delta, z^\delta)}z^\delta}_Y \overset{\delta \to 0}{\longrightarrow} 0\,.
		\end{align*}
		This shows the claim.
	\end{proof}
\end{thm}
We denote this kind of regularisation scheme in the following \textbf{\nameRegmethode}.
We assumed that for $\delta \to 0$  the convergence of $R^Gz^\delta$ to $G^\dagger z$ is faster than the divergence of $\norm{R^F_\gamma}$. For this, the restriction of the exact data $z$ to a subset $\mathfrak Z$ is crucial. In general, for any ill-posed operator $G$ there cannot exist such a convergence speed result for  arbitrary $z \in Z$, see  \cite{Bakushinsky_1984, EHN_1996}.
The needed type of results are however readily available for restricted true solutions, see \cite[Section 3.2]{EHN_1996}. Based on these assumptions, we will analyse the convergence speed of the DDP in Section~\ref{sec:convSpeed}.

\subsection{Analysis of the Convergence Speed}\label{sec:convSpeed}
In this section, we analyse the convergence speed of \nameRegmethode ~approaches given source conditions. Assume in the following that $F$ and $G$ are compact and simultaneously diagonalisable, i.e.
let
\begin{equation}\label{eqn:SVD_FG}
	Fx = \sum _n \lambda _n \SP{x}{x_n}y_n, \quad Gy = \sum_n \mu_n \SP{y}{y_n}z_n.
\end{equation}

It is not necessary to assume both operators to be compact but this simplifies the analysis. As a result of \eqref{eqn:SVD_FG}, the composed operator can be represented as 
$Ax = \sum _n \lambda_n \mu_n \SP{x}{x_n}z_n$ and its pseudo-inverse is given by $A^\dagger = F^\dagger G^\dagger$.

As outlined in the preceding discussion
, it is impossible to achieve a constructive result concerning convergence speed. However, restricting
the true solution  $x^\dagger \in X_\nu^A \subset X$ to subspaces of $X$, 
it is possible to derive convergence speed results and even to achieve 
 optimal convergence speed.
For a given operator $A:X \to Z$, we define the space 
\[
X_\nu^A = \left\{ x \in \text{dom}(A) \mid \exists \tilde x \in \mathcal N(A)^\perp: x = \abs{A}^\nu \tilde x \right\}, \quad \nu > 0\,.
\]
To analyse this type of spaces, we use the Picard-condition, see Theorem~\ref{thm:Picard} in the appendix.

Consider  the case $x \in X_\nu ^A$, $\nu>0$. Further assume that $\lambda_i, \mu_i \leq 1$ for all $i \in \N$. We then know that 
\[\infty > \sum_n \frac{\abs{\SP{x}{x_n}}^2}{\lambda_n^{2\nu}\mu_n^{2\nu}}\geq \sum_n \frac{\abs{\SP{x}{x_n}}^2}{\lambda_n^{2\nu}}\]
and thus $x\in X_\nu^F$.
Moreover, we have $Fx \in X_\nu^G$ as
\[
\infty > \sum_n \frac{\abs{\SP{x}{x_n}}^2}{\lambda_n^{2\nu}\mu_n^{2\nu}} \geq \sum_n \frac{\lambda_n^2\abs{\SP{x}{x_n}}^2}{\mu_n^{2\nu}} = \sum_n \frac{\abs{\SP{Fx}{y_n}}^2}{\mu_n^{2\nu}}.
\]

In this case, it is possible to show the following lower limit for the reconstruction error.

\begin{thm}[\text{\cite[p. 55-59]{EHN_1996}}]
	For any compact A and a regularisation $(R_\alpha, \alpha)$, there exists a sequence $\delta_n \to 0$ such that
	\begin{equation}\label{eqn:OrderOptBound}
		\inf_\alpha \sup \set{ \norm{R_\alpha z^\delta - x^\dagger} }{ x^\dagger = \abs A ^\nu w, \, \norm{w}\leq \rho, \, \norm{Ax^\dagger - z^\delta}\leq \delta_n} \geq \delta_n^\frac{\nu}{\nu+1} \rho 	^\frac{1}{\nu+1}.
	\end{equation}
\end{thm}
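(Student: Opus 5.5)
The plan is the classical worst-case-error argument from \cite{EHN_1996}: a single noisy datum is consistent with two different smooth solutions, so no method can be close to both. First I fix what is being compared. For a data vector $z^\delta$ with $\norm{z^\delta}\le\delta$, both $x^\dagger=0$ and any $x^\dagger=\abs A^\nu w$ with $\norm{w}\le\rho$ and $\norm{Ax^\dagger}\le 2\delta$ are admissible. The triangle inequality then gives a lower bound on the worst-case error:
\[
\max\bigl(\norm{R_\alpha 0-0},\ \norm{R_\alpha 0 - x^\dagger}\bigr)\ \ge\ \tfrac12\norm{x^\dagger}.
\]
Here I choose $z^\delta=0$ in both situations; this is admissible since $\norm{Ax^\dagger - 0}\le\delta$ if I instead require $\norm{Ax^\dagger}\le\delta$. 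This reduces the problem to bounding the modulus of continuity
\[
\omega(\delta,\rho)=\sup\set{\norm{x}}{x=\abs A^\nu w,\ \norm w\le\rho,\ \norm{Ax}\le\delta}
\]
from below, uniformly in $\alpha$.

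Next, I evaluate $\omega$ along the singular system of $A$ with singular values $\sigma_n\to0$. This is the only place compactness enters: it supplies infinitely many small $\sigma_n$. I take $w=\rho\, x_n$, so that $x=\rho\sigma_n^\nu x_n$ and $\norm{Ax}=\rho\sigma_n^{\nu+1}$. I then define
\[
\delta_n:=\rho\,\sigma_n^{\nu+1}.
\]
The constraint $\norm{Ax}\le\delta_n$ holds with equality, and $\norm{x}=\rho\sigma_n^\nu=\delta_n^{\nu/(\nu+1)}\rho^{1/(\nu+1)}$. Since $\sigma_n\to0$, also $\delta_n\to0$, and this is the sequence claimed in the statement.

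Finally, I combine the two steps. For every $\alpha$, the supremum in \eqref{eqn:OrderOptBound} is at least $\max(\norm{R_\alpha 0},\norm{R_\alpha0-x})$, taken over the two candidates $0$ and $x$ with the same datum $0$. Taking the infimum over $\alpha$ preserves this bound.

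The main obstacle is the constant. The naive argument yields only $\tfrac12\delta_n^{\nu/(\nu+1)}\rho^{1/(\nu+1)}$, whereas the statement has constant $1$. To remove the factor $\tfrac12$, I would use the symmetric pair $\pm x$ with datum $z^\delta=0$. Both are admissible because $\norm{A(\pm x)}=\delta_n$, and
\[
\norm{R_\alpha0-x}+\norm{R_\alpha0+x}\ge 2\norm{x},
\]
so the maximum of the two errors is at least $\norm{x}$. This gives exactly $\delta_n^{\nu/(\nu+1)}\rho^{1/(\nu+1)}$. The same argument applies verbatim to linear $R_\alpha$, and it works for arbitrary, even nonlinear, $R_\alpha$.
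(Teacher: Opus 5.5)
Your argument is correct and is essentially the standard one from the cited pages of \cite{EHN_1996}. There, the worst-case error is bounded below by the modulus of continuity $\sup\{\norm{x} : x=\abs{A}^\nu w,\ \norm{w}\le\rho,\ \norm{Ax}\le\delta\}$, and this is attained at $w=\rho x_n$ with $\delta_n=\rho\sigma_n^{\nu+1}$; your symmetric pair $\pm x$ with datum $0$ only replaces the normalisation $R(0)=0$ assumed there, so it covers arbitrary, even nonlinear or data-dependent, $R_\alpha$. The one thing to make explicit is a hypothesis you use silently, namely $\dim\mathcal R(A)=\infty$: for a finite-rank compact $A$ there is no sequence $\sigma_n\to 0$, and the statement is then false (take $R_\alpha\equiv A^\dagger$, whose worst-case error is at most $\delta/\sigma_{\min}<\delta^{\frac{\nu}{\nu+1}}\rho^{\frac{1}{\nu+1}}$ for small $\delta$).
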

Any regularisation fulfilling \eqref{eqn:OrderOptBound} with equality is called an optimal method. If  the left hand side can be bounded with $c\delta^\frac{\nu}{\nu+1} \rho 	^\frac{1}{\nu+1}$ it is called  order optimal.
\subsubsection{Trivial Cases}
We start the convergence analysis with the cases where one of the operators $F$ and $G$ is well-posed.
\subsubsection*{Case 1: F well-posed}
Consider the case where $F$ is well-posed, i.e. the pseudo-inverse of $F$ is continuous. Consequently,  we can choose $R^F_\alpha = F^\dagger$. Given the source condition $x^\dagger \in X_\nu^A$, we  have $Fx^\dagger = y^\dagger \in X_\nu^G$.  
An order optimal regularisation $(R^G_\alpha, \alpha)$ for $G^\dagger$ with parameter choice rule $\alpha=\alpha \left(\delta, z^{\delta}  \right)$ 
implies 
\begin{equation}\label{eqn:F_well_posed}
	\norm{x^\dagger - F^\dagger R^G_\alpha z^\delta} \leq \norm{x^\dagger - F^\dagger y^\dagger} + \norm{F^\dagger}\norm{y^\dagger - R^G_\alpha z^\delta}= \mathcal O\left(\delta^\frac{\nu}{\nu + 1}\right)
\end{equation}
and the regularisation is order optimal, too. In this case, each parameter choice rule of a regularisation fulfils the conditions stated in Theorem~\ref{thm:compRegIsReg}. Moreover, given $F = \mathrm{id}$ and an optimal regularisation $(R^G_\alpha, \alpha)$, the method is optimal, too.

The pseudo-inverse is not always available for use. Consider therefore any regularisation $(R^F_\gamma, \gamma)$ for $F^\dagger$ with an a-priori parameter choice rule $\gamma$ and the arbitrary but fixed sequences $\delta_n \to 0$ and $z^\delta_n$ such that $\norm{z-z^\delta_n} \leq \delta_n$. By the regularisation property, we have for each $y\in Y$ that $R^F_{\gamma(\delta_n)} y \longrightarrow F^\dagger y$ as $\gamma(\delta_n) \to 0$. For each $y \in Y$, we can bound the sequence
\begin{align*}
\norm{R^F_{\gamma(\delta_n)}y} \leq \norm{R^F_{\gamma(\delta_n)}y - F^\dagger y} + \norm{F^\dagger}\norm{y} \to \norm{F^\dagger}\norm{y}.
\end{align*}
Therefore, by the Banach-Steinhaus theorem, it follows that $\sup_n \norm{R^F_{\gamma(\delta_n)}} < \infty$. This yields with the same computation as in \eqref{eqn:F_well_posed} that
\[\norm{x^\dagger - R^F_{\gamma} R^G_\alpha z^\delta_n} = \mathcal O\left(\delta^{\frac \nu {\nu +1}}\right).\]

\subsubsection*{Case 2: G well-posed}

In the case, where $G$ is well-posed, $G^\dagger$ is continuous. Choosing $R^G_\alpha = G^\dagger$ and an order optimal regularisation $(R^F_\gamma, \gamma)$ for $F^\dagger$ with a-priori parameter choice rule 
$\gamma \equiv \gamma\left(\norm{G^\dagger}\delta\right)$, 
we obtain 
\begin{align*}
	\norm{x^\dagger - R^F_{\gamma} G^\dagger z^\delta} &\leq \norm{x^\dagger - R^F_\gamma y^\dagger} + \norm{R^F_\gamma\left(y^\dagger - G^\dagger z^\delta\right)}\\
	&\leq C\left(\norm{G^\dagger}\delta\right)^\frac{\nu}{\nu+1} = \mathcal O\left(\delta^\frac{\nu}{\nu+1}\right).
\end{align*}
As $\gamma$ is independent of $z^\delta$, we have $\norm{R^F_\gamma}\delta \to 0$ for $\delta \to 0$ and the condition~\eqref{eqn:CondcompReg} is fulfilled for the given choices of regularisations. Overall, we achieve an order optimal regularisation.

\subsubsection{Non-trival Case: both F and G ill-posed}
Assuming that $F$ and $G$ are ill-posed, we derive  a-priori and an a-posteriori rule.
\subsubsection*{A-Priori Parameter Choice Rule}
To do our analysis
, we need to take assumptions on the regularisation $R^G_\alpha$ and on the behaviour of the operator norm $\norm{R^F_\gamma}$ with regards to $\gamma$.
In the following, we assume $R_\alpha ^G$ to be an order-optimal regularisation, i.e. 
\[\norm{y^\dagger - R^G_\alpha z^\delta} = \mathcal O\left(\delta^\frac{\nu}{\nu + 1}\right).\]

We apply an a-priori parameter choice rule $\gamma \equiv \gamma(\delta)$ for the regularisation  $R^F_\gamma$ and suppose that the approximation error fulfil
\begin{equation}
	\norm{x^\dagger - R^F_\gamma y^\dagger} = \mathcal O\left(\gamma(\delta)^\frac \nu 2 \right) \label{eq:approxE}	
\end{equation}
and the asymptotic behaviour of the operator norm satisfies 
\begin{equation}
\norm{R_\gamma^F} = \mathcal O\left(\gamma(\delta)^{-\frac{1}{2}}\right).\label{eq:Opnorm}
\end{equation}

\begin{bem}
	The estimates for the approximation error \eqref{eq:approxE}	 and the operator norm \eqref{eq:Opnorm} are not arbitrary; they are met by regularisation methods defined via spectral filtering \cite[Section 3.3]{Rieder_2003}. Examples for such methods are the Tikhonov regularisation or the truncated singular value decomposition (TSVD).
\end{bem}

Using the parameter choice rule $\gamma(\delta) \sim \delta^C$ for some constant $C>0$, we arrive at the (for this choice) optimal parameter $C = \frac{2 \nu}{(\nu +1 )^2}$. Coincidentally, this is also the expected choice as $\delta^\frac{2\nu}{(\nu +1)^2} = \left(\delta^\frac{\nu}{\nu+1}\right)^\frac{2}{\nu+1}$. This parameter choice  implies the error estimate
\begin{align*}
	\norm{x^\dagger - R^F_\gamma R^G_\alpha z^\delta} &\leq \norm{x^\dagger - R_\gamma^Fy^\dagger} + \norm{R_\gamma^F}\norm{y^\dagger -R_\alpha^Gz^\delta}\\
	&= \mathcal O\left( \gamma(\delta)^\frac \nu 2 + \gamma(\delta)^{-\frac 1 2 }\delta^\frac \nu{\nu +1} \right) = \mathcal O\left( \delta^{\left(\frac{\nu}{\nu+1}\right)^2}\right).
\end{align*}
This is, as a regularisation for $A^{\dagger}$ with source condition $x^\dagger \in X_\nu^A$, suboptimal. However, as we split the regularisation into two steps, we expect to amplify the given noise $\delta$ (without regarding constants) to $\delta^\frac \nu {\nu +1}$ when applying the first regularisation. The second regularisation  solves an inverse problem with noise level $\delta^\frac \nu {\nu +1}$ and this is amplified to $\left(\delta^\frac \nu {\nu +1}\right)^\frac \nu {\nu +1} = \delta^{\left(\frac{\nu}{\nu+1}\right)^2}$, the derived estimate.

\subsubsection*{A-Posteriori Parameter Choice Rule}

In the case where the smoothness-parameter $\nu$ is unknown, we propose a modified discrepancy principle as a parameter choice rule. Remind that 
in the classical Morozov discrepancy principle,  a  monotonically decreasing sequence $\alpha_k$ is used  
	and  the parameter  $\alpha = \alpha_k$ is chosen with help of the inequality
	\begin{equation}\label{eqn:ClassMor}
		\norm{AR_{\alpha_k}z^\delta - z^\delta} \leq \tau \delta < \norm{AR_{\alpha_i}z^\delta - z^\delta},
	\end{equation}
	for $i = 1,..., k-1$, where  $\tau >1$ is fixed.  
It is possible to show order optimality for the discrepancy principle for some $\nu$. More concretely, let the regularisation  be given as an (appropriate) filter method $R_\alpha = f_\alpha(A^*A)A^*$ and set $r_\alpha (\lambda) := 1 - \lambda f_\alpha(\lambda)$. We  state the result for the  Morozov principle \cite{Morozov_1968}.
\begin{thm}[\text{\cite[Satz 3.4.1]{Rieder_2003}}]
	Suppose the filter $f_\alpha$ is a regularising filter with qualification $\nu_0$, i.e.
	\begin{equation}
		\lambda ^\frac \nu 2 \abs{r_\alpha(\lambda)} \leq w_\nu(\alpha) \sim \alpha^\frac \nu 2, \text{ for } \alpha \to 0 \label{eqn:ClassMorQual}
	\end{equation}
	for all $0 < \nu \leq \nu_0, \, \nu_0 > 1$ and let
	\begin{align}
		\sup\left\{ \abs{f_\alpha(\lambda)}\mid \lambda \in [0, \norm{A}^2]\right\}\leq \frac C \alpha\label{eqn:ClassMorLim}.
	\end{align}
	Further assume
	\begin{equation}
		\tau > \sup \left\{ \abs{r_\alpha(\lambda)} \mid \alpha >0, \, \lambda \in [0, \norm{A}^2]\right\}.
	\end{equation}
	Moreover, let the sequence of parameters $\alpha_k$ fulfil $\alpha_k = \alpha_{k-1} \theta_{k-1}$
		with $0 < \theta < \theta_k <1$ for all $k \geq 2$. Then the regularisation $(R_\alpha, \alpha)$, with $\alpha$ selected  by the discrepancy principle  \eqref{eqn:ClassMor} is order optimal with regards to $X^A_\nu$ for all $\nu \leq \nu_0 -1$.
	
\end{thm}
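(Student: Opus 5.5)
We follow the classical proof of order optimality of the discrepancy principle for filter methods. Write $x^\dagger = \abs{A}^\nu w$ with $\norm{w}\leq\rho$, let $\alpha_k$ be the parameter selected by \eqref{eqn:ClassMor}, and abbreviate $R=R_{\alpha_k}$, $r=r_{\alpha_k}$. We split the error in the usual way:
\[
\norm{x^\dagger - R z^\delta} \leq \norm{x^\dagger - R z} + \norm{R(z-z^\delta)} .
\]
The first term is the approximation error $r(A^*A)x^\dagger = r(A^*A)\abs{A}^\nu w$. The second term is the data error. Since $\norm{R} = \norm{f_\alpha(A^*A)A^*}\leq \sup_\lambda \sqrt{\lambda}\abs{f_\alpha(\lambda)}$, combining \eqref{eqn:ClassMorLim} with boundedness of $r_\alpha$ (from $\tau>\sup\abs{r_\alpha}$) gives $\sqrt\lambda\abs{f_\alpha}\leq (\abs{f_\alpha}\cdot\lambda\abs{f_\alpha})^{1/2}\leq c\,\alpha^{-1/2}$. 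Hence the data error is at most $c\,\delta\,\alpha_k^{-1/2}$.

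\textbf{Step 1 (the approximation error, via interpolation).} By the moment inequality,
\[
\norm{r(A^*A)\abs{A}^\nu w}\leq \norm{r(A^*A)\abs{A}^{\nu+1}w}^{\frac{\nu}{\nu+1}}\,\norm{r(A^*A)w}^{\frac{1}{\nu+1}} ,
\]
using that $\abs{r}$ is bounded and applying Hölder to the spectral measure of $w$. Since $r(AA^*)A=Ar(A^*A)$, the first factor equals $\norm{r(AA^*)z}$. The triangle inequality and the discrepancy condition give
\[
\norm{r(AA^*)z}\leq \norm{AR z^\delta - z^\delta} + \norm{r(AA^*)(z-z^\delta)} \leq (\tau + \sup\abs{r})\delta .
\]
The second factor is at most $\sup\abs{r}\,\rho$. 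Together this gives approximation error $\leq c\,\delta^{\nu/(\nu+1)}\rho^{1/(\nu+1)}$.

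\textbf{Step 2 (the data error; the main obstacle).} We need a lower bound $\alpha_k \gtrsim (\delta/\rho)^{2/(\nu+1)}$. For this we use the previous index $\alpha_{k-1}$, which violates the discrepancy inequality:
\[
\tau\delta < \norm{r_{\alpha_{k-1}}(AA^*)z^\delta} \leq \norm{r_{\alpha_{k-1}}(AA^*)z} + \sup\abs{r}\,\delta .
\]
Because $\tau>\sup\abs{r}$, it follows that $(\tau-\sup\abs r)\delta < \norm{r_{\alpha_{k-1}}(A^*A)\abs{A}^{\nu+1}w}$. By the qualification condition \eqref{eqn:ClassMorQual} applied with exponent $\nu+1\leq\nu_0$ (this is exactly where the restriction $\nu\leq\nu_0-1$ enters), the right-hand side is bounded by $c\,\alpha_{k-1}^{(\nu+1)/2}\rho$. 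Then $\alpha_k\geq\theta\alpha_{k-1}\geq \theta\,c'(\delta/\rho)^{2/(\nu+1)}$, so
\[
\delta\alpha_k^{-1/2}\leq c\,\delta^{\nu/(\nu+1)}\rho^{1/(\nu+1)} .
\]
The edge case $k=1$ needs separate treatment, as no previous index exists. If $\alpha_1$ is fixed, the data error is $O(\delta)$, which is of higher order. Alternatively, one can assume $\delta$ is small enough that $k\geq2$.

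Combining Steps 1 and 2 yields order optimality. Regularisation property (1) follows from the uniform bounds as $\delta\to0$. Property (2), $\alpha_k\to0$, follows from the discrepancy inequality: $\norm{r_{\alpha_k}(AA^*)z}\to0$ forces $\alpha_k\to0$ whenever $x^\dagger\neq0$.
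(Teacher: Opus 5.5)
Your argument is correct and is the standard proof of this result. The paper gives no proof of its own here, since it only quotes Rieder's Satz~3.4.1. Your argument nonetheless follows exactly the template the paper uses for Theorem~\ref{thm:RegMorozov}:
\begin{itemize}
\item interpolation plus the discrepancy inequality to get $\norm{r_{\alpha_k}(AA^*)z}\lesssim\delta$;
\item the violated inequality at $\alpha_{k-1}$, with qualification at exponent $\nu+1$, to bound $\alpha_k^{-1/2}$;
\item the norm bound $\norm{R_\alpha}\lesssim\alpha^{-1/2}$;
\item a separate first-index case, where the paper uses $\delta\le\norm{z}\lesssim\rho$, which is equivalent to your ``higher order'' remark.
\end{itemize}

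One caveat lies outside the order-optimality claim. Your closing assertion that the discrepancy inequality forces $\alpha_k\to0$ fails, and so does your alternative of assuming $k\geq 2$ for small $\delta$. Both break for filters whose $r_\alpha$ vanishes on part of the spectrum, e.g.\ TSVD with $z$ spanned by singular vectors with $\sigma^2\geq\alpha_1$. There $r_{\alpha_1}(AA^*)z=0$, so $k=1$ for every $\delta$. Keep your first treatment of the case $k=1$ and drop that last claim.
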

We cannot apply the discrepancy principle of Morozov  for the problem with composite operator~\eqref{eqn:OpSplitting}. To see this, we bring to mind that we split the inverse problem into two inverse problems, where we need to select a parameter for each regularisation. 
First, we solve the problem
\[Gy = z\,,\]
where  only the noisy version $z^\delta$ is available with $\norm{z-z^\delta}\leq \delta$. 
Assuming $y = G^\dagger z \in X^G_\nu$ and applying an order optimal regularisation, 
we achieve a reconstruction $y^\varepsilon$ with the error estimate $\norm{y - y^\varepsilon}\leq \varepsilon : = \delta^\frac \nu {\nu +1}$. In this step, we may apply the discrepancy principle. This noise estimate is not available without knowing $\nu$, thus, we  are not able to regularise the second inverse problem
\[Fx = y\]
with help of the Morozov discrepancy principle without knowing $\nu$ in the first place.

In the following, we assume both $F^\dagger$ and $G^\dagger$ to be regularised with filter methods. Therefore, let $f^G_\alpha : [0, \norm{G}^2] \to \R_{\geq 0}$ and $f^F_\gamma : [0, \norm{F}^2] \to \R_{\geq 0}$ be piecewise continuous functions with jump discontinuities. We define\[R^G_\alpha z := f^G_\alpha(G^*G)G^*z = \sum_{n}f^G_\alpha(\mu_n^2)\mu_n \SP{z}{z_n}y_n\]
and
\[R^F_\gamma y := f^F_\gamma(F^*F)F^*y = \sum_{n}f^F_\gamma(\lambda_n^2)\lambda_n \SP{y}{y_n}x_n.\]

Moreover, for the rest of this section, we assume the following on the filters to ensure that $R^G_\alpha$ and $R^F_\gamma$ are regularisations with the desired properties.
\begin{ass} \label{ass_filter}
	\begin{enumerate}
		\item The families $\left\{f^G_\alpha\right\}_\alpha$ and $\left\{f^F_\gamma\right\}_\gamma$ are regularising filters, i.e. 
		\begin{enumerate}
			\item for small regularisation parameter, the regularisations tend to the generalised inverse as
			\begin{align*}
				\lim_{\alpha\to 0}f^G_\alpha(\mu) = \frac 1 \mu, \quad 
				\lim_{\gamma\to 0}f^F_\gamma(\lambda) = \frac 1 \lambda
			\end{align*}
			for all $\mu \in (0, \norm{G}^2]$ and $\lambda \in (0, \norm F^2]$.
			\item the operators $FR^F_\gamma$ and $GR^G_\alpha$ are bounded, i.e. there exist $C_g, C_F > 0$ such that
			\begin{align*}
				\mu \abs{f^G_\alpha(\mu)}\leq C_G, \quad
				\lambda \abs{f^F_\gamma(\lambda)}\leq C_F
			\end{align*}
			for all $\gamma, \alpha >0$, $\mu \in (0, \norm{G}^2]$ and $\lambda \in (0, \norm F^2]$.
		\end{enumerate}
		
		\item Both families of filter functions are, up to a constant, asymptotically limited by $\alpha^{-1}$. This means there exist constants $M_g, M_F>0$  such that 
		\begin{align*}
			\sup_{\mu \in (0, \norm{G}^2]}\abs{f^G_\alpha(\mu)}\leq M_G \alpha^{-1},\,\sup_{\lambda \in (0, \norm{F}^2]}\abs{f^F_\gamma(\lambda)}\leq M_F \gamma^{-1}
		\end{align*}
		for $\gamma, \alpha \to 0$.
		\item The filters $\left\{f^G_\alpha\right\}_\alpha$ and $\left\{f^F_\gamma\right\}_\gamma$ have qualification $\nu_0$, i.e. for all $\nu \leq \nu_0$, $\nu_0 > 1$ 
		\begin{align*}
			\sup_{\mu \in (0, \norm{G}^2]} \mu^\frac{\nu}{2}\abs{p^G_\alpha(\mu)} \leq D_G(\nu)\alpha^\frac\nu 2, \; \alpha \to 0,\\
			\sup_{\lambda \in (0, \norm{F}^2]} \lambda^\frac{\nu}{2}\abs{p^F_\gamma(\lambda)} \leq D_F(\nu)\gamma^\frac\nu 2, \; \gamma\to 0.
		\end{align*}
In particular, we assumed the qualification of $R^F_\gamma$ and $R^G_\alpha$ to be the same to simplify the following discussion. However, the principle can be studied and applied with regularisations with different qualifications. Here, we denote $p_\alpha^G(\mu) = 1 - \mu f^G_\alpha(\mu)$ and $p_\gamma^F(\lambda) = 1 - \lambda f^F_\gamma(\lambda).$
		
	\end{enumerate}
	
\end{ass}

With these assumptions, we achieve the following result.
\begin{thm}\label{thm:RegMorozov}
	Let the Assumptions \ref{ass_filter} hold and  $\tau^G > \sup_{\alpha >0, \, \mu \in [0, \norm{G}^2]}\left\{ p_\alpha^G(\mu)\right\} \geq 1$. Moreover,  we  assume
	\[\tau^F > \sup_{\gamma >0, \, \lambda \in [0, \norm{F}^2]}\left\{ p_\gamma^F(\lambda)\right\}C_G =: S_F C_G.\]
	Further, set $\alpha_{k+1} = \vartheta ^G _k \alpha_k$ and $\gamma_{n+1} = \vartheta ^F _n \gamma_n$, with $0 < \theta \leq  \vartheta_k ^G, \vartheta_n ^F <1$. 
	Choose the regularisation parameter $\alpha = \alpha_{k^*}$  for $R^G_\alpha$ such that
	\begin{equation}\label{eqn:ownMor1}
		\norm{GR^G_{\alpha_{k^*}}z^\delta - z^\delta}\leq \tau^G\delta < \norm{GR^G_{\alpha_{i}}z^\delta - z^\delta}, \; i = 1, ..., k^*-1
	\end{equation}
	and the parameter $\gamma = \gamma_ {n^*}$ for $R^F_\gamma$ such that
	\begin{equation}\label{eqn:ownMor2}\norm{GFR^F_{\gamma_{n^*}}R^G_{\alpha_{k^*}}z^\delta - GR^G_{\alpha_{k^*}}z^\delta} \leq \tau^F \delta < \norm{GFR^F_{\gamma_{i}}R^G_{\alpha_{k^*}}z^\delta - GR^G_{\alpha_{k^*}}z^\delta}, 
	\end{equation}
	$i = 1, ..., n^*-1$. Then the resulting regularisation $R^F_\gamma R^G_\alpha$ fulfils
	\[\norm{x^\dagger - R^F_\gamma R^G_\alpha z^\delta} = \mathcal O\left( \delta^\frac{\nu-1}{\nu+1} \rho^\frac{1}{(\nu +1)^2}\right)\]
	for all $z = \abs{A}^\nu w \in X^A_\nu$, $\norm{z - z^\delta}\leq \delta$, $\norm{w} \leq \rho$ and $\nu \in (1, \nu_0-1]$.
\end{thm}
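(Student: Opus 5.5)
We follow the route of the classical proof of the Morozov principle (\cite[Satz 3.4.1]{Rieder_2003}), applied twice. First we would treat the $G$-step separately. Since $x^\dagger=\abs{A}^\nu w$ and $\lambda_n,\mu_n\le 1$, the computations before Case 1 give $y^\dagger=Fx^\dagger=G^\dagger z\in X^G_\nu$ with a source element of norm at most $\rho$. The rule \eqref{eqn:ownMor1} is exactly the classical discrepancy principle for $G$. The cited theorem, with $\nu\le\nu_0-1$, therefore yields
\[\norm{y^\dagger-R^G_{\alpha_{k^*}}z^\delta}=\mathcal O\bigl(\delta^{\frac{\nu}{\nu+1}}\rho^{\frac1{\nu+1}}\bigr)=:\varepsilon.\]
We also record the discrepancy-type bound $\norm{G(y^\dagger-R^G_{\alpha}z^\delta)}\le(\tau^G+1)\delta$, which follows directly from \eqref{eqn:ownMor1} and the triangle inequality.

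For the $F$-step we write $\tilde y:=R^G_\alpha z^\delta$ and split the error as
\[x^\dagger-R^F_\gamma\tilde y=p^F_\gamma(F^*F)x^\dagger-R^F_\gamma(\tilde y-y^\dagger).\]
The second term is bounded by $\norm{R^F_\gamma}\varepsilon\lesssim\gamma^{-1/2}\varepsilon$, using Assumption~\ref{ass_filter}(1b),(2). For the first term we use the interpolation inequality
\[\norm{p_\gamma x^\dagger}\le\norm{p_\gamma w'}^{\frac1{\nu+1}}\norm{\abs F^{\nu+1}p_\gamma w'}^{\frac{\nu}{\nu+1}}\]
with $x^\dagger=\abs F^\nu w'$, so that we need to control $\norm{Fp^F_\gamma(F^*F)x^\dagger}$. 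The key observation is that the discrepancy in \eqref{eqn:ownMor2} is measured through $G$. Since $GF=A$ and the operators are simultaneously diagonal, $GFR^F_\gamma\tilde y-G\tilde y=-Gp^F_\gamma(FF^*)\tilde y$. Adding and subtracting $y^\dagger$, and using $\abs{p^F}\le S_F$ together with $\norm{G(\tilde y-y^\dagger)}\le(\tau^G+1)\delta$, we get
\[\norm{A p^F_\gamma(F^*F)x^\dagger}\le(\tau^F+S_F(\tau^G+1))\delta\]
at $\gamma=\gamma_{n^*}$. Interpolating now in the $A$-source scale, with $x^\dagger=\abs A^\nu w$, gives $\norm{p_\gamma x^\dagger}\lesssim\delta^{\frac\nu{\nu+1}}\rho^{\frac1{\nu+1}}$, which is harmless.

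The hard part is bounding $\gamma_{n^*}$ from below, which is needed to control $\gamma^{-1/2}\varepsilon$. At $\gamma_{n^*-1}$ the discrepancy exceeds $\tau^F\delta$. The assumption $\tau^F>S_FC_G$ is designed to absorb the noise part $\norm{G p^F_\gamma(\tilde y-y^\dagger)}$. Concretely, one writes $G\tilde y-Gy^\dagger=(GR^G_\alpha-\text{id})z^\delta$ plus terms of size at most $C_G\delta$, which leaves
\[(\tau^F-S_FC_G)\delta\lesssim\norm{Ap_{\gamma_{n^*-1}}x^\dagger}\le D(\nu+1)\,\gamma_{n^*-1}^{\frac{\nu+1}2}\rho,\]
using qualification $\nu+1\le\nu_0$ and $\mu_n\le1$. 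Hence $\gamma_{n^*}\ge\theta\gamma_{n^*-1}\gtrsim(\delta/\rho)^{\frac2{\nu+1}}$. I expect this to be the main obstacle: precisely isolating the $\delta$-level noise contribution inside $G(\tilde y-y^\dagger)$ so that it is uniformly below $S_FC_G\delta$ is the delicate step, and I would first try to do it through the explicit spectral representation of $\tilde y$.

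Combining the pieces gives
\[\gamma^{-1/2}\varepsilon\lesssim\delta^{-\frac1{\nu+1}}\rho^{\frac1{\nu+1}}\cdot\delta^{\frac\nu{\nu+1}}\rho^{\frac1{\nu+1}}=\delta^{\frac{\nu-1}{\nu+1}}\rho^{\frac{2}{\nu+1}}.\]
This is the claimed rate in $\delta$. The $\rho$-exponent in the statement, $\frac1{(\nu+1)^2}$, differs from what this computation produces, and tracking the $\rho$-dependence more carefully through the balancing would be needed to recover it exactly. The case $\nu>1$ is exactly what is needed for this rate to tend to zero.
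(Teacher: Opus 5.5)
Your $G$-step, your splitting $x^\dagger-R^F_\gamma\tilde y=p^F_\gamma(F^*F)x^\dagger-R^F_\gamma(\tilde y-y^\dagger)$, and your interpolation bound for the first term are correct. That bound uses $\norm{Ap^F_\gamma(F^*F)x^\dagger}\le(\tau^F+S_F(\tau^G+1))\delta$ at $\gamma=\gamma_{n^*}$, and together these steps are in substance the paper's argument (the paper splits off $x^\dagger-R^F_\gamma R^G_\alpha z$ instead, which is cosmetic). The genuine gap is the step you flag yourself. With $\sigma=\gamma_{n^*-1}$, the inequality $(\tau^F-S_FC_G)\delta\lesssim\norm{Ap^F_\sigma(F^*F)x^\dagger}$ is true, but you do not derive it, and the route you sketch cannot give it. Splitting $\tilde y=y^\dagger+(\tilde y-y^\dagger)$ gives
\[G(\tilde y-y^\dagger)=GR^G_\alpha(z^\delta-z)-p^G_\alpha(GG^*)z.\]
Only the first summand is noise of size $C_G\delta$. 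The second is the $G$-residual at exact data; it is not noise and is only controlled through the discrepancy inequality. If you estimate $\norm{Gp^F_\sigma(FF^*)(\tilde y-y^\dagger)}$ as a whole, you are left with $(\tau^F-S_F(\tau^G+1))\delta$ on the left. That is vacuous unless $\tau^F>S_F(\tau^G+1)$, a hypothesis the theorem does not make. Your rewriting as $(GR^G_\alpha-\mathrm{id})z^\delta$ plus terms of size $C_G\delta$ does not help either, since that term can be as large as $\tau^G\delta$.

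The missing idea is to split by linearity in $z^\delta$ rather than in $\tilde y$, i.e.\ to merge $-p^G_\alpha(GG^*)z$ with $Gy^\dagger=z$ into $GR^G_\alpha z$. By simultaneous diagonalisation,
\[GFR^F_\sigma\tilde y-G\tilde y=q_\sigma(z^\delta),\qquad q_\sigma(v)=-\sum_n p^F_\sigma(\lambda_n^2)\,\mu_n^2f^G_\alpha(\mu_n^2)\,\SP{v}{z_n}z_n .\]
Hence $\norm{q_\sigma(z^\delta-z)}\le S_FC_G\delta$, which is exactly what $\tau^F>S_FC_G$ absorbs. Meanwhile $\SP{z}{z_n}=\lambda_n\mu_n\SP{x^\dagger}{x_n}$ and $\abs{\mu_n^2f^G_\alpha(\mu_n^2)}\le C_G$ give
\[\norm{q_\sigma(z)}\le C_G\norm{Ap^F_\sigma(F^*F)x^\dagger}\le C_GD_F(\nu+1)\sigma^{\frac{\nu+1}{2}}\norm{G}^{\nu+1}\rho.\]
The $G$-approximation error never appears separately; it is hidden in the bounded factor $\mu_n^2f^G_\alpha(\mu_n^2)$. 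This is the paper's Lemma~\ref{lem:calcDataErr}, and it yields your lower bound on $\gamma_{n^*}$.

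You also omit the case $n^*=1$, where no $\gamma_{n^*-1}$ exists. The paper handles it via $\delta\le\norm{z}\le\norm{A}^\nu\rho$, which gives $\gamma_1^{-1/2}\lesssim(\rho/\delta)^{\frac{1}{\nu+1}}$.

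Finally, your exponent $\rho^{\frac{2}{\nu+1}}$ is the correct one, so there is nothing to recover. The penultimate line of the paper's proof produces the same product $\delta^{\frac{\nu-1}{\nu+1}}\rho^{\frac{2}{\nu+1}}$. Moreover, replacing $(w,\rho,z^\delta,\delta)$ by $(cw,c\rho,cz^\delta,c\delta)$ leaves both discrepancy rules unchanged and multiplies the error by $c$. So the exponents of $\delta$ and $\rho$ must sum to $1$, which $\frac{\nu-1}{\nu+1}+\frac{1}{(\nu+1)^2}$ does not.
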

We call the parameter choice rule \eqref{eqn:ownMor1} - \eqref{eqn:ownMor2}  \textbf{\nameMethode}. We will prove the rate of convergence with help of some lemmata.

\begin{lem}\label{lem:calcDataErr}
	Define $q(z) := \left(GFR^F_\gamma R^G_\alpha - GR^G_\alpha\right)z$. Then we have 
	
	\begin{equation}\label{eqn:lem:DataErr1}
		q(z) = -\sum_n  p^F_\gamma(\lambda_n^2)\left(f^G_\alpha(\mu_n^2)\mu_n^2\right)\SP{z}{z_n}z_n,
	\end{equation}
	$\norm{q(z) }\leq \sup_{\gamma, \lambda} p^F_\gamma(\lambda) C_G \norm{z}$,
	\begin{equation}\label{eqn:lem:DataErr2}
		q(A \abs{A}^\nu w) = -\sum_n  \lambda_n^{\nu +1}\mu_n^{\nu +1}p^F_\gamma(\lambda_n^2)\left(f^G_\alpha(\mu_n^2)\mu_n^2\right)\SP{w}{x_n}z_n,
	\end{equation}
	and $\norm{q(A \abs{A}^\nu w)} \leq D_F(\nu +1) s^\frac{\nu +1}{2}C_G \norm G^{\nu +1}\norm w$ for $s \to 0$.
\end{lem}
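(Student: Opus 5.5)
The plan is a direct spectral computation in the simultaneous singular system \eqref{eqn:SVD_FG}. Every operator in $q$ acts diagonally on the bases $\{x_n\}$, $\{y_n\}$, $\{z_n\}$, so the steps are: write the expansion, bound the coefficients uniformly, and apply Parseval.

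\textbf{Step 1: the expansion \eqref{eqn:lem:DataErr1}.} We have $R^G_\alpha z = \sum_n f^G_\alpha(\mu_n^2)\mu_n\SP{z}{z_n}y_n$ and $GR^G_\alpha z = \sum_n f^G_\alpha(\mu_n^2)\mu_n^2\SP{z}{z_n}z_n$. Applying $R^F_\gamma$ multiplies the $n$-th coefficient by $f^F_\gamma(\lambda_n^2)\lambda_n$ and maps $y_n\mapsto x_n$. Applying $F$ then multiplies by $\lambda_n$ and maps $x_n\mapsto y_n$, and $G$ multiplies by $\mu_n$ and maps $y_n\mapsto z_n$. Hence
\[
GFR^F_\gamma R^G_\alpha z=\sum_n \lambda_n^2 f^F_\gamma(\lambda_n^2)\, f^G_\alpha(\mu_n^2)\mu_n^2\SP{z}{z_n}z_n .
\]
Subtracting $GR^G_\alpha z$ leaves the factor $\lambda_n^2f^F_\gamma(\lambda_n^2)-1=-p^F_\gamma(\lambda_n^2)$, which is exactly \eqref{eqn:lem:DataErr1}. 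Components of $z$ in $\overline{\mathcal R(G)}^\perp$ are annihilated by $G^*$, so they contribute nothing.

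\textbf{Step 2: the uniform bound.} By Parseval,
\[
\norm{q(z)}^2=\sum_n \abs{p^F_\gamma(\lambda_n^2)}^2\abs{f^G_\alpha(\mu_n^2)\mu_n^2}^2\abs{\SP{z}{z_n}}^2 .
\]
Assumption~\ref{ass_filter}(1b) gives $\abs{\mu_n^2 f^G_\alpha(\mu_n^2)}\le C_G$. Bounding the first factor by $\sup_{\gamma,\lambda}\abs{p^F_\gamma(\lambda)}$ and using Bessel's inequality $\sum_n\abs{\SP{z}{z_n}}^2\le\norm z^2$ yields the claim. One should read the supremum with absolute values, which is harmless since it is the quantity used in Theorem~\ref{thm:RegMorozov}.

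\textbf{Step 3: the source-condition case \eqref{eqn:lem:DataErr2}.} Since $A$ has singular system $(\lambda_n\mu_n; x_n, z_n)$, we have $\abs A^\nu w=\sum_n(\lambda_n\mu_n)^\nu\SP{w}{x_n}x_n$. Therefore $\SP{A\abs A^\nu w}{z_n}=\lambda_n^{\nu+1}\mu_n^{\nu+1}\SP{w}{x_n}$, and inserting this into \eqref{eqn:lem:DataErr1} gives \eqref{eqn:lem:DataErr2}.

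For the norm bound I group the coefficient as $\bigl(\lambda_n^{\nu+1}p^F_\gamma(\lambda_n^2)\bigr)\cdot\bigl(\mu_n^2 f^G_\alpha(\mu_n^2)\bigr)\cdot\mu_n^{\nu+1}$ and bound the three factors separately.
\begin{itemize}
\item The qualification in Assumption~\ref{ass_filter}(3), applied with index $\nu+1\le\nu_0$ (legitimate because $\nu\le\nu_0-1$ in Theorem~\ref{thm:RegMorozov}), gives $\lambda_n^{\nu+1}\abs{p^F_\gamma(\lambda_n^2)}\le D_F(\nu+1)\gamma^{(\nu+1)/2}$.
\item The middle factor is at most $C_G$.
\item The last factor is at most $\norm G^{\nu+1}$.
\end{itemize}
Parseval and $\sum_n\abs{\SP{w}{x_n}}^2\le\norm w^2$ then give $\norm{q(A\abs A^\nu w)}\le D_F(\nu+1)\,\gamma^{(\nu+1)/2}\,C_G\norm G^{\nu+1}\norm w$ as $\gamma\to0$. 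Here $s$ in the statement is the parameter $\gamma$.

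\textbf{Main obstacle.} There is no real obstacle; the only points needing care are bookkeeping. First, the qualification index must be $\nu+1$ rather than $\nu$, which is why the range restriction on $\nu$ is needed. Second, the bound is asymptotic in $\gamma$, as in Assumption~\ref{ass_filter}(3).
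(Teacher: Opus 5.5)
Your proposal is correct and matches the paper's proof essentially step for step. Both expand $GR^G_\alpha z$ and $GFR^F_\gamma R^G_\alpha z$ in the common singular system, factor out $\lambda_n^2 f^F_\gamma(\lambda_n^2)-1=-p^F_\gamma(\lambda_n^2)$, and bound the coefficients by the supremum of $\abs{p^F_\gamma}$, by $C_G$, by the qualification with index $\nu+1$, and by $\norm{G}^{\nu+1}$. Your clarifications are sound and only make explicit what the paper leaves implicit: taking absolute values in the supremum, reading $s$ as $\gamma$, and needing $\nu+1\le\nu_0$.
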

\begin{proof}
	We can rewrite the operator $q$ with help of the singular value decompositions \eqref{eqn:SVD_FG} of $F$ and $G$. We expand both parts of $q$ separately and obtain
	\[GR^G_\alpha(z) = Gf^G_\alpha(G^*G)G^*z = \sum_nf^G_\alpha(\mu^2_n)\mu_n^2\SP{z}{z_n}z_n\]
	and
	\begin{align*}
		GFR^F_\gamma R^G_\alpha z &= GFf^F_\gamma (F^*F)F^*f^G_\alpha (G^*G)G^*z \\
		&=\sum_n f^F_\gamma (\lambda_n^2)\lambda_n^2 f^G_\alpha (\mu_n^2)\mu_n^2 \SP{z}{z_n}z_n.
	\end{align*}
	Both results combined give
	\begin{align*}
		q(z) &= \sum_n \left( f^F_\gamma (\lambda_n^2)\lambda_n^2 f^G_\alpha(\mu_n^2)\mu_n^2 - f^G_\alpha(\mu^2_n)\mu_n^2\right)\SP{z}{z_n}z_n\\
		&= \sum_n \left( f^F_\gamma (\lambda_n^2)\lambda_n^2 - 1\right) f^G_\alpha (\mu_n^2)\mu_n^2 \SP{z}{z_n}z_n\,,
	\end{align*}
	which proves \eqref{eqn:lem:DataErr1}. This representation results in an upper bound for the squared norm
	\[
		\norm{q(z)}^2 = \sum_n \underbrace{\left( f^F_\gamma(\lambda_n^2)\lambda_n^2 - 1\right)^2}_{\leq\left(\sup_{\gamma, \lambda} p^F_\gamma(\lambda)\right)^2} \underbrace{\left(f^G_\alpha(\mu_n^2)\mu_n^2\right)^2}_{\leq C_G^2} \abs{\SP{z}{z_n}}^2\leq \left(\sup_{\gamma, \lambda} p^F_\gamma(\lambda) C_G \norm z \right)^2.
	\]
	The second representation \eqref{eqn:lem:DataErr2} follows directly from
	\[A\abs{A}^\nu w = \sum_k \lambda_n^{\nu +1} \mu_n^{\nu +1} \SP{w}{x_n}z_n\,.\]
	We conclude that
	\begin{align*}
		\norm{q(A\abs{A}^\nu w)}^2 &= \sum_n \underbrace{\left(\lambda_n^{\nu +1} p_\gamma^F(\lambda_n^2)\right)^2}_{\leq\left(D_F(\nu+1)\gamma^\frac{\nu +1}{2}\right)^2} \underbrace{\left(f^G_\alpha(\mu_n^2)\mu_n^2\right)^2}_{\leq C_G^2} (\underbrace{\mu_n^{\nu+1}}_{\leq\norm{G}^{\nu+1}})^2 \abs {\SP{w}{x_n}}^2\\
		&\leq \left(D_F(\nu+1)\gamma^\frac{\nu +1}{2} C_G \norm{G}^{\nu+1} \norm{w}\right)^2
	\end{align*}
	for $\gamma \to 0$.
\end{proof}

\begin{lem}\label{lem:calcApproxErr}
	Define $p(x) = (\mathrm{id} - R^F_\gamma R^G_\alpha GF)x$. Then it holds
	\[p\left(\abs{A}^\nu w\right) = \abs{A^\nu}p(w).\]
\end{lem}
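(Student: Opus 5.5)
We will prove the identity by expanding both sides in the common singular system $\{x_n\}$ of \eqref{eqn:SVD_FG} and comparing coefficients, the same way as in the proof of Lemma~\ref{lem:calcDataErr}. Everything reduces to the observation that all operators involved are diagonal in $\{x_n\}$, so they commute.

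First we make the operators explicit. By \eqref{eqn:SVD_FG} we have $GFx = \sum_n \lambda_n\mu_n\SP{x}{x_n}z_n$. Next, $R^G_\alpha z = \sum_n f^G_\alpha(\mu_n^2)\mu_n\SP{z}{z_n}y_n$ and $R^F_\gamma y = \sum_n f^F_\gamma(\lambda_n^2)\lambda_n\SP{y}{y_n}x_n$. Composing these gives
\[
R^F_\gamma R^G_\alpha GF x = \sum_n f^F_\gamma(\lambda_n^2)\lambda_n^2\, f^G_\alpha(\mu_n^2)\mu_n^2\,\SP{x}{x_n}x_n .
\]
Hence $p(x) = \sum_n \bigl(1 - \lambda_n^2 f^F_\gamma(\lambda_n^2)\,\mu_n^2 f^G_\alpha(\mu_n^2)\bigr)\SP{x}{x_n}x_n + P_{\mathcal N}x$. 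Here $P_{\mathcal N}$ denotes the projection onto the part of $X$ not spanned by the $x_n$, on which $GF$ vanishes and $p$ acts as the identity. We write $\pi_n := 1 - \lambda_n^2 f^F_\gamma(\lambda_n^2)\mu_n^2 f^G_\alpha(\mu_n^2)$ for the scalar multiplier. It is bounded by Assumption~\ref{ass_filter}(1b), since $\abs{\pi_n}\le 1 + C_FC_G$, so $p$ is a bounded diagonal operator.

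The second step is to compute $\abs{A}^\nu = (A^*A)^{\nu/2}$. From $Ax=\sum_n\lambda_n\mu_n\SP{x}{x_n}z_n$ we get $A^*Ax = \sum_n \lambda_n^2\mu_n^2\SP{x}{x_n}x_n$. Functional calculus then gives $\abs{A}^\nu w = \sum_n (\lambda_n\mu_n)^\nu\SP{w}{x_n}x_n$, which vanishes on $\mathcal N(A)$. We will read the notation $\abs{A^\nu}$ in the statement as $\abs{A}^\nu$.

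Finally, we compare coefficients. We have $\SP{\abs{A}^\nu w}{x_n} = (\lambda_n\mu_n)^\nu\SP{w}{x_n}$ and $P_{\mathcal N}\abs{A}^\nu w = 0$. Therefore
\[
p(\abs{A}^\nu w) = \sum_n \pi_n(\lambda_n\mu_n)^\nu\SP{w}{x_n}x_n = \abs{A}^\nu\Bigl(\sum_n \pi_n\SP{w}{x_n}x_n\Bigr) = \abs{A}^\nu p(w),
\]
where the last equality uses $\abs{A}^\nu P_{\mathcal N}w=0$.

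The only delicate point is bookkeeping on the null space. $p$ acts as the identity there while $\abs{A}^\nu$ kills it, so the two sides agree there trivially. If $w$ is taken in $\mathcal N(A)^\perp$, as in the definition of $X^A_\nu$, this issue disappears altogether. Convergence of all the series follows from boundedness of the coefficients.
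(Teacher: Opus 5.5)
Your proposal is correct and is essentially the paper's proof. You expand $p$ and $\abs{A}^\nu$ in the common singular system $\{x_n\}$, note that both act diagonally, and conclude that the multipliers $(\lambda_n\mu_n)^\nu$ and $1-\lambda_n^2 f^F_\gamma(\lambda_n^2)\,\mu_n^2 f^G_\alpha(\mu_n^2)$ commute. Two of your details are ones the paper's one-line expansion silently assumes: the bookkeeping on the orthogonal complement of $\{x_n\}$ (where $p$ is the identity and $\abs{A}^\nu$ vanishes), and reading $\abs{A^\nu}$ as $\abs{A}^\nu$, which is what the paper's proof actually concludes.
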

\begin{proof}
	Using
	\[p(x) = \sum_n \left( 1 - f^F_\gamma (\lambda_n^2)\lambda_n^2 f^G_\alpha(\mu_n^2)\mu_n^2 \right) \SP{x}{x_n}x_n\]
	and inserting $x = \abs{A}^\nu w = \sum_n \lambda_n^\nu \mu_n^\nu \SP{w}{x_n}x_n$, we arrive at
	\[
		p(\abs{A}^\nu w ) = \sum_n \lambda_n^\nu \mu_n^\nu\left( 1 - f^F_\gamma(\lambda_n^2)\lambda_n^2 f^G_\alpha(\mu_n^2)\mu_n^2 \right) \SP{w}{x_n}x_n = \abs{A}^\nu p(w).
	\]	

\end{proof}
For the proof of Theorem \ref{thm:RegMorozov} we need the interpolation equality and the Picard condition, see Appendix~\ref{subsec:RegThBasics}.

\begin{proof}[Proof of Theorem~\ref{thm:RegMorozov}]
	Let $z \in \mathcal R(A)$ and denote $x^\dagger = A^\dagger z$. We assume $x^\dagger = \abs{A}^\nu w \in X_\nu^A$ for some $\nu \leq \nu_0-1$ and $\norm{w} \leq \rho$. Let $\alpha_{k^*} =:\alpha$ 
	 and $\gamma_{n^*} =: \gamma$ be the parameter chosen 
	 by the double discrepancy principle \eqref{eqn:ownMor1} and \eqref{eqn:ownMor2}. We define $y^\dagger = G^\dagger z$ and have therefore $x^\dagger = F^\dagger y^\dagger$.
		The assumptions of this section ensure that $R^G_\alpha$ is an order-optimal regularisation, as the parameter choice rule \eqref{eqn:ownMor1} coincides with the Morozov discrepancy principle \eqref{eqn:ClassMor}. Thus, using $\lesssim$ as an abbreviation for "less or equal up to a constant not dependent on $\delta$ and $\rho$", we have $\norm{y^\dagger - R^G_\alpha z^\delta}\lesssim \delta^\frac{\nu}{\nu+1}\rho ^\frac 1 {\nu +1}$.

	Next, we apply the triangle inequality
	\begin{equation}\label{eqn:TriangProofMor}
		\norm{x^\dagger - R^F_\gamma y^\dagger}\leq \norm{x^\dagger - R^F_\gamma R^G_\alpha z} + \norm{R^F_\gamma}\norm{R^G_\alpha z - y^\dagger}
	\end{equation} and analyse both parts separately. We start with the first term. Lemma~\ref{lem:calcApproxErr} and the interpolation inequality \eqref{eqn:IntEq} imply
	\begin{align}\label{eqn:xdag-xdel}
		\norm{x^\dagger - R^F_\gamma R^G_\alpha z} &= \norm{x^\dagger - R^F_\gamma R^G_\alpha GFx^\dagger}=\norm{\abs{A}^\nu (1-R^F_\gamma R^G_\alpha GF)w}\notag\\
		&\leq \norm{\abs{A}^{\nu+1} (1-R^F_\gamma R^G_\alpha GF)w}^\frac{\nu}{\nu+1}\norm{(1-R^F_\gamma R^G_\alpha GF)w}^\frac{1}{\nu+1}
	\end{align}
	and hence
	\begin{align}\label{eqn:xdag-xdel2}
		&\hspace{-1em}\norm{\abs{A}^{\nu+1} (1-R^F_\gamma R^G_\alpha GF)w} = \norm{A (1-R^F_\gamma R^G_\alpha GF)x^\dagger} = \norm{z - AR^F_\gamma R^G_\alpha z}\notag\\
		&\leq\norm{ AR^F_\gamma R^G_\alpha z^\delta - GR^G_\alpha z^\delta} + \norm{GR^G_\alpha z^\delta - z^\delta} + \norm{AR^F_\gamma R^G_\alpha (z-z^\delta) - (z-z^\delta)}\notag\\
		&\leq \tau^F\delta  + \tau^G\delta+ \norm{(1- GFR^F_\gamma R^G_\alpha)(z-z^\delta)}\notag\\
		&\leq (\tau^F + \tau^G + 1 + C_G C_F)\delta.
	\end{align}
	Here, we used the defining inequalities of the double discrepancy principle \eqref{eqn:ownMor1} and \eqref{eqn:ownMor2}.
	Plugging \eqref{eqn:xdag-xdel2} in \eqref{eqn:xdag-xdel}, we have bound the first term of \eqref{eqn:TriangProofMor} by $\mathcal O(\delta^\frac{\nu}{\nu+1}\rho^\frac{1}{\nu})$.

	We can limit the second term of \eqref{eqn:TriangProofMor} by $\norm{R^F_\gamma}\norm{y^\dagger - R^G_\alpha z}\lesssim \norm{R^F_\gamma} \rho ^\frac 1 {\nu +1} \delta^\frac\nu {\nu +1}.$
	We now investigate the operator norm of $R^F_\gamma$ in two cases. 
	
	In the first case, we  assume that $n^* \geq 2$. Writing $\sigma := \gamma_{n^* -1} = \frac \gamma {\vartheta^F_{n^*-1}}$, the right inequality of the discrepancy principle \eqref{eqn:ownMor2} together with Lemma~\ref{lem:calcDataErr} yields
	
	\begin{align*}
		\tau^F\delta &< \norm{GFR^F_{\sigma}R^G_\alpha z^\delta - GR^G_\alpha z^\delta}\\
		&\leq\norm{\left(GFR^F_{\sigma}R^G_\alpha - GR^G_\alpha\right)z} + \norm{\left(GFR^F_{\sigma}R^G_\alpha - GR^G_\alpha\right)(z^\delta -z)}\\
		&\leq D_F(\nu +1) \sigma^\frac{\nu +1}{2}C_G \norm G \rho + \sup_{\gamma, \lambda} p^F_\gamma(\lambda) C_G \delta\\
		&= D_F(\nu+1) \sigma^\frac{\nu +1}{2}C_G \norm G \rho + S_F C_G \delta\,.
	\end{align*}
	This leads to 
	\[\left(\tau^F - S_F C_G\right)\delta \leq D_F(\nu+1) \theta ^{-\frac{\nu +1}{2}}s^\frac{\nu +1}{2}C_G \norm G \rho\]
	and
	\[s^{-\frac 1 2} \leq \left(\frac{D_F(\nu+1) C_G \norm{G}}{\tau^F - S_F C_G}\right)^\frac{1}{\nu+1}\theta^{-\frac 1 2} \rho^\frac 1 {\nu +1} \delta ^{-  \frac 1 {\nu +1}}.\]
	
	In the case of $\gamma = \gamma_1$ we assume that $z \neq 0$, and for sufficiently small $\delta$ it holds
	\[\delta \leq \norm{z} = \norm{\abs{A}^\nu w} \leq \norm A^\nu \rho,\]
	and therefore
	\[\gamma^{-\frac 1 2} = \gamma_1^{-\frac 1 2} \leq \gamma_1^{-\frac 1 2} \norm{A} \rho ^\frac 1 {\nu +1} \delta^{-\frac 1 {\nu +1}}.\]
	
	In both cases Lemma~\ref{lem:OpNormRF} implies
	\begin{equation}\label{eqn:NormRF}
		\norm{R^F_\gamma} \leq \sqrt{C_F M_F}\gamma^{-\frac 1 2} \lesssim\rho ^\frac 1 {\nu +1} \delta^{-\frac 1 {\nu +1}}.
	\end{equation}
	
	We conclude with \eqref{eqn:xdag-xdel}, \eqref{eqn:xdag-xdel2}, \eqref{eqn:NormRF} and the bound of the order-optimal regularisation $R^G$ that
	\begin{align*}
		\norm{x^\dagger - R^F_\gamma R^G_\alpha z^\delta} &\leq \norm{x^\dagger - R^F_\gamma R^G_\alpha z} + \norm{R^F_\gamma}\left(\norm{R^G_\alpha z - y^\dagger}  + \norm{y^\dagger - R^G_\alpha z^\delta}\right)\\
		&\lesssim \delta^\frac{\nu}{\nu +1} \rho^\frac 1 {\nu +1} + \delta^{-\frac{1}{\nu +1}}\rho^\frac 1 {\nu +1}\left(\delta^\frac{\nu}{\nu+1}\rho^\frac{1}{\nu +1}\right)\\
		&= \mathcal O \left( \delta^\frac{\nu-1}{\nu +1}\rho^\frac{1}{(\nu +1)^2}\right).
	\end{align*}
\end{proof}

Up to now, it is unclear if the parameter choice rules \eqref{eqn:ownMor1} and \eqref{eqn:ownMor2} always lead to admissible parameters. We have $FR^F_\gamma y - y \to \mathrm P_{\mathcal R(F)^\perp}x$ and similar $GR^G_\alpha z - z\to \mathrm P_{\mathcal R(G)^\perp}z$ for $\gamma,\,\alpha \to 0$. Thus, for $z \in \mathcal R(A)^\perp$ this shows $\norm{GR^G_\alpha z^\delta - z^\delta} \to \norm{\mathrm P_{\mathcal R(A)^\perp}(z-z^\delta)} \leq \delta$ and the choice of the Morozov discrepancy principle and  also the rule \eqref{eqn:ownMor1} are always possible. Regarding \eqref{eqn:ownMor2}, we obtain
\begin{align*}
	GFR^F_{\gamma}R^G_{\alpha_{k^*}}z^\delta - GR^G_{\alpha_{k^*}}z^\delta &= G \left( \left(FR^F_{\gamma} - \mathrm{id}\right) \left(\sum_n \SP{y_n}{R^G_{\alpha_{k^*}}z^\delta}y_n\right)\right)\\
	&\to G  \left(\sum_n \SP{y_n}{R^G_{\alpha_{k^*}}z^\delta}\underbrace{\mathrm{P}_{\mathcal R(F)^\perp}(y_n)}_{= 0}\right)= 0
\end{align*}
for $\gamma \to 0$. This shows that the parameter choice rule \eqref{eqn:ownMor2} is well-defined, too. Moreover, even the choice of $\tau^F <1$ results in a finite iteration.
Similar to the Morozov discrepancy principle, we have a suboptimal result for finite qualification.

\section{Numerical Experiments}\label{sec:NumExp}
In this section, we evaluate the proposed \nameRegmethode ~method on two examples. We test it once on the pre-clinical imaging technique of magnetic particle imaging in Subsection~\ref{sec:NumerikMPI}, which we will briefly introduce in Subsection~\ref{sec:MPI}. Since no smoothness information is available for the MPI example,  we apply the \nameMethode ~\eqref{eqn:ownMor1} and \eqref{eqn:ownMor2} on a convolution example in Subsection~\ref{sec:NumerikConv}.

\subsection{Basics for Model-based Reconstruction in MPI}\label{sec:MPI}

We state a simple model of the MPI forward operator. As already mentioned in the introduction, the aim is the reconstruction of  the spatial distribution of a magnetisable tracer inside the patients body. Applying an external magnetic field, the change of the particle magnetisation induces a voltage which is measured with help of receive coils. More explicitly, the Biot-Savard law implicates that the particle concentration is linked to the measured voltage of the receive coils via
\[u(t) = - \mu_0 \int_\Omega c(\mathbf r) \partial_t \bar {\mathbf m}(\mathbf r, t) \cdot \mathbf p \, \mathrm d \mathbf r\,,\]
see  \cite{KnoppBuzug_2012} for details. 
Here, $\mu_0$ denotes the magnetic permeability on free space, $\Omega$ the area enclosed by the respective receive coil, $\bar{\mathbf m}$ the mean magnetic moment and $\mathbf p$ the coil sensitivity. The mean magnetic moment can be modelled in multiple ways, for example as the solution of the Fokker-Plank equation \cite{Kluth_etal_2019} or the Landau-Lifshitz-Gilbert equation \cite{Kaltenbacher_etal_2019}. In the following we will use the much simpler Langevin model of paramagnetism, where 
the mean magnetic moment is described using the Langevin function 
\begin{align*}
\mathcal L: \R \to \R\,,\quad
\lambda \mapsto \begin{cases}
\coth(\lambda) - \frac 1 \lambda & \lambda \neq 0\\
0 & \text{otherwise.}
\end{cases}
\end{align*}
However, this model ignores relaxation effects of the particles.
The derivative of the Langevin function resembles a bell-curve, i.e. the change of magnetisation is biggest where the magnetic field vanishes. 
	Thus, the particles in that area contribute most to the signal and particles in magnetic saturation the least, see also Figure~\ref{fig:LangevinDer}. 

\begin{figure}
	\centering
	\includegraphics[]{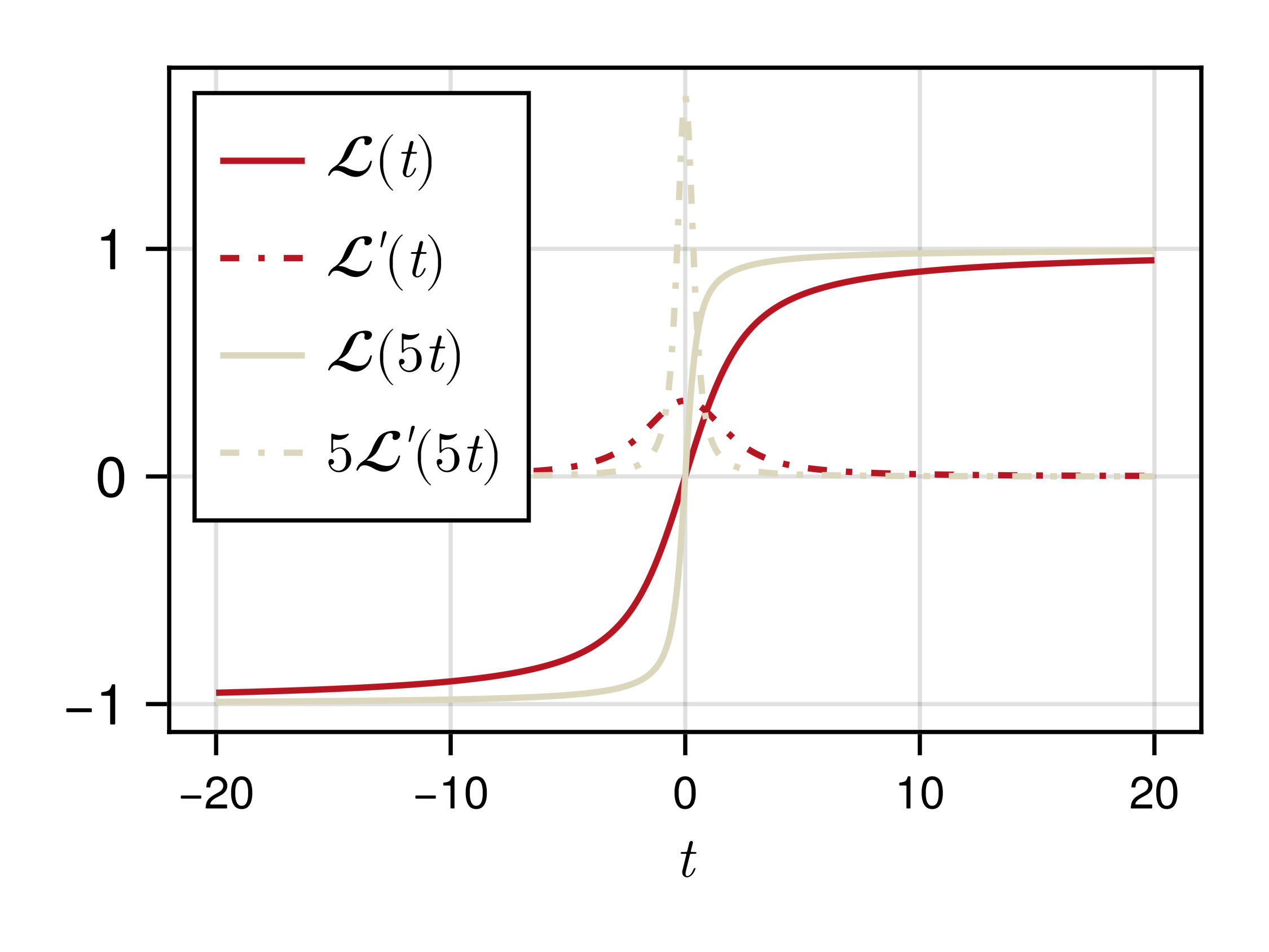}
	\caption{The Langevin function $\mathcal L$ links the modulus of the mean particle magnetisation given an external magnetic field. Its derivative $\mathcal L'$ is the principal component of the convolution kernel in the MPI application.}
	\label{fig:LangevinDer}
\end{figure}

The scanner and its setting is encoded in the magnetisation. Thus, a change in the scanning environment results in a different forward operator. In the following, we will assume that the area, where the magnetisation vanishes, resembles a line which is rotated and translated during the measuring process. This set-up is called field-free line (FFL) MPI instead of a point-shaped volume called field-free point (FFP). 

\begin{figure}[h]
	\centering
	\begin{subfigure}{0.45\linewidth}
	\centering
	\includegraphics[scale = 0.1875]{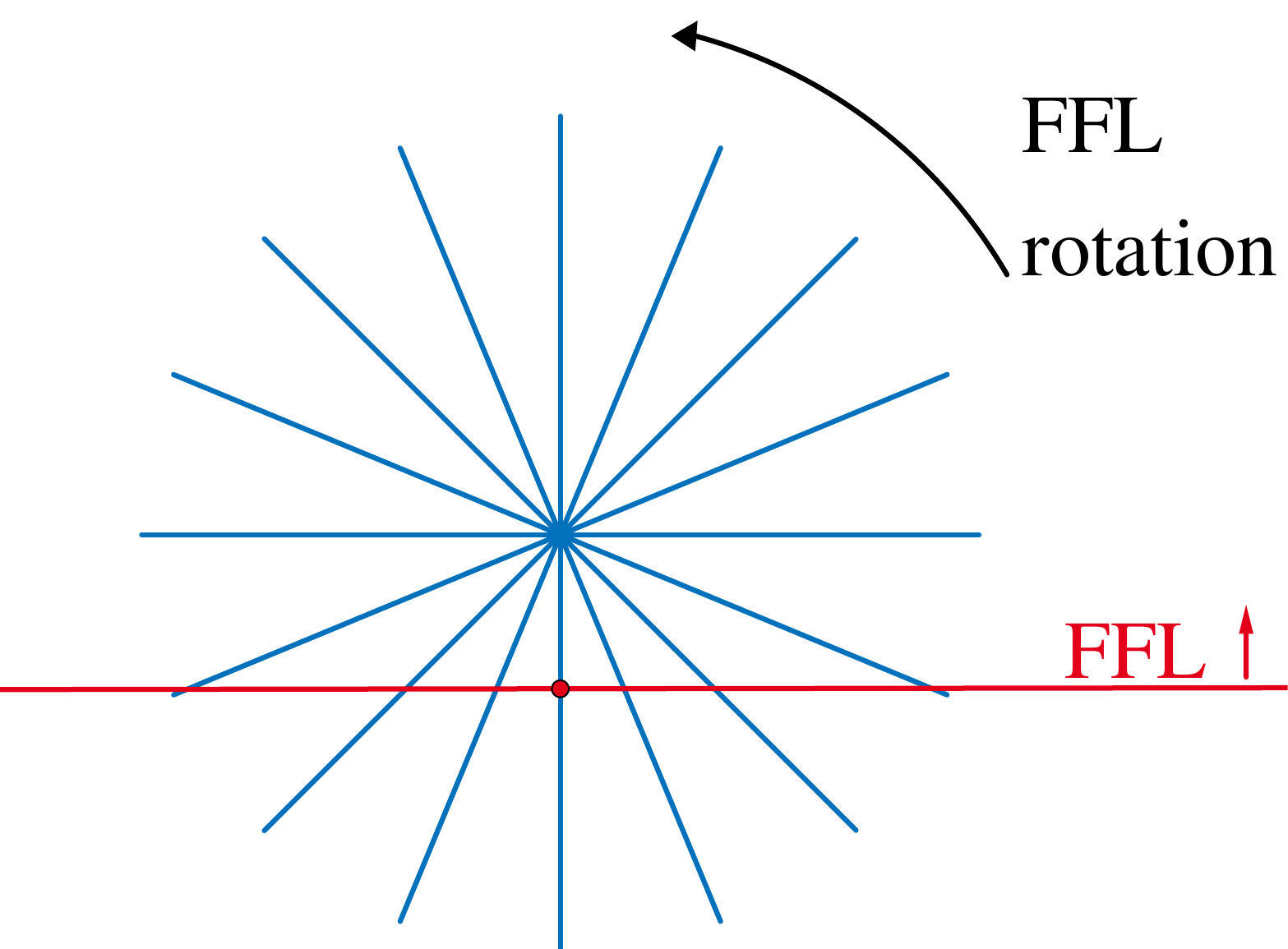}
	\subcaption{Sequential line rotation}
\end{subfigure}\begin{subfigure}{0.45\linewidth}
	\centering
	\includegraphics[scale = 0.1875]{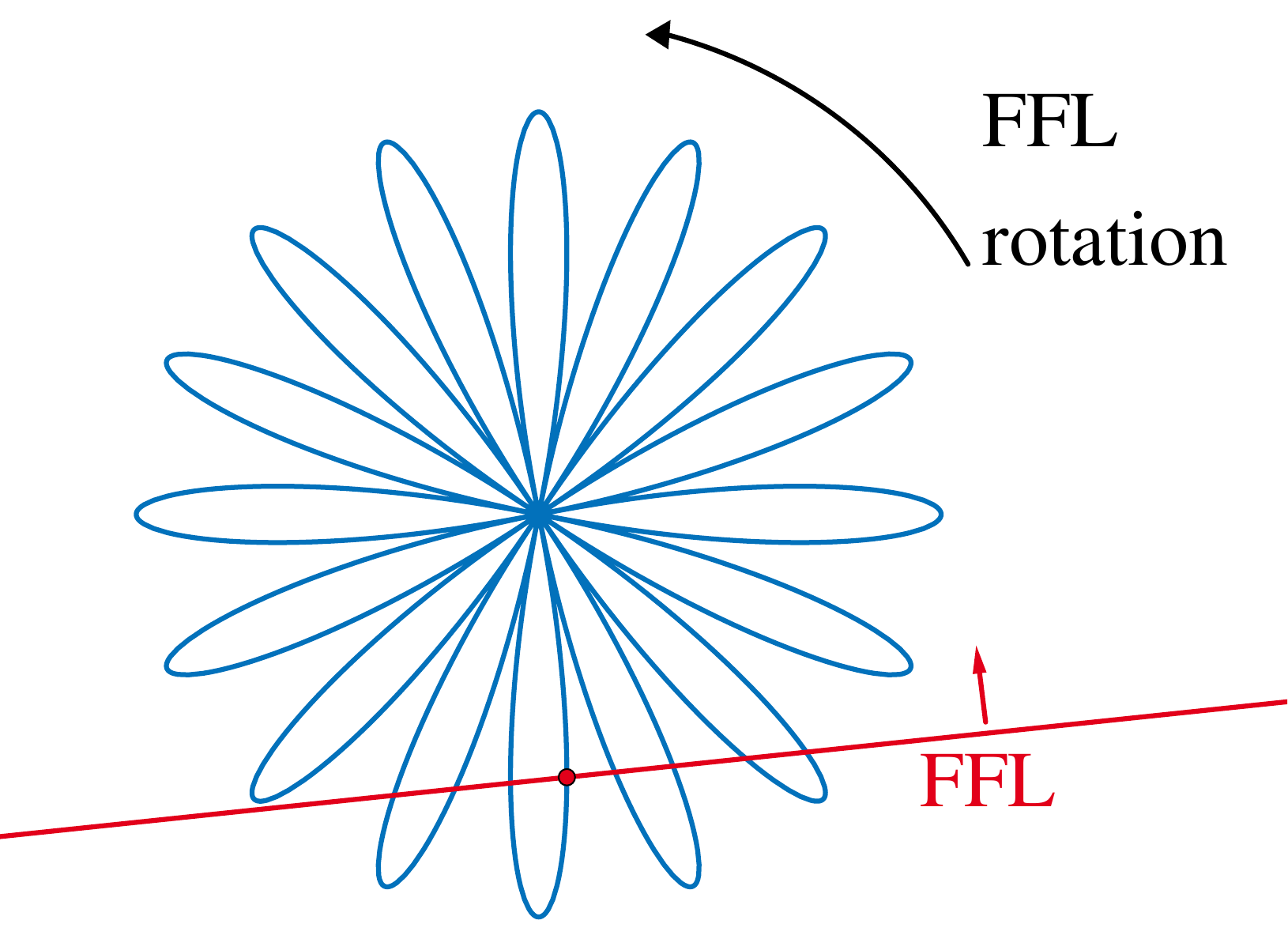}
	\subcaption{Simultaneous line rotation}
\end{subfigure}
	\caption{\cite{Blanke_2022}: Movement of the field-free line through 2D space for different set-ups. (a) The angle of the line is piecewise constant, (b) angle changes uniformly during the measuring process.}
	\label{fig:FFL}
\end{figure}

In \cite{Knopp_2011}, FFL MPI with sequential line rotation was connected to the Radon transform. This was later expanded to simultaneous line rotation \cite{BlankeBrandt_2023}. See also Figure~\ref{fig:FFL} for an illustration. In preparation to restate these results, we define the Radon transform and related operators. In the following, we use the Schwartz space
\[\mathscr S(\R^2) := \set{ f : \R^2 \to \R }{ \lim_{\abs{x} \to \infty} x^\alpha D^\beta f (x) = 0 \; \forall \alpha, \beta \in \N_0^2}.\]
We adjust the definition to different spaces accordingly. We use the two-dimensional Fourier transform $\mathcal F_2$ on functions $f:\R^2 \to \R$ and the one-dimensional Fourier transform $\mathcal F_1$ on functions $f:\R\times [0,\pi)$ on the first dimension. Further, we need the Radon transform,
\begin{align*}
		\mathcal R: \mathscr S(\R^2) &\to \mathscr S(\R\times [0,\pi))\\
		\mathcal R f (s, \varphi) &= \int_\R f(s \cos \varphi + t \sin \varphi, s \sin \varphi + t \cos \varphi) \,\mathrm d t,
	\end{align*}
	the backprojection,
	\begin{equation}\label{eqn:DefBP}\begin{split}
		\mathcal B: \mathscr S ( \R \times [0, \pi)) &\to \mathscr S(\R^2)\\
		\mathcal Bg(x,y) &= \frac 1 \pi \int_0^\pi g(x\cos \varphi + y \sin \varphi, \varphi)\,\mathrm d \varphi,
	\end{split}\end{equation}
	and the weighted Radon transform
	\begin{align*}
		\tilde{\mathcal R}: \mathscr S(\R^2) &\to \mathscr S(\R\times [0,\pi))\\ \tilde {\mathcal R}f(s, \varphi) &= \int_\R f(s \cos \varphi - t \sin \varphi, s \sin \varphi + t \cos \varphi) t \,\mathrm d t.
	\end{align*}

Using the well-known Fourier-Slice theorem, see Theorem \ref{thm:FourierSlice} in the appendix, we can obtain reconstruction formulas.
In comparison to the classical formulation, see for example  Theorem II.2.1 in \cite[]{Natterer1986}, we add a function $k \in \mathscr S(\R)$ in the following theorem, which is used  later to modify the formula of the filtered backprojection to the MPI setting.
\begin{thm}\label{thm:MPI_FBPConv}
	For $\alpha < 2$ and $f \in \mathscr S(\R^2)$ and $k \in \mathscr S(\R)$, we have 
	\begin{align*}
		k\left(\norm{\xi}\right)\norm{\xi}^{-\alpha} \mathcal F_2 f(\xi) = \frac 1 2 \mathcal F_2( \mathcal B(S))(\xi) ,
	\end{align*}
	where $\mathcal F_1 S (\xi, \varphi)= k(\abs{\xi})\abs{\xi}^{1-\alpha}\mathcal F_1 \mathcal Rf(\xi, \varphi)$.
\end{thm}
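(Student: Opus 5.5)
We compute $\mathcal F_2(\mathcal B S)$ directly, following the standard proof of the filtered backprojection formula (Theorem II.2.1 in \cite{Natterer1986}) with the extra factor $k$ carried through. The strategy is to write $\mathcal B S$ as an integral over $\varphi$ of ridge functions, express each ridge function through its one-dimensional Fourier transform, and then pass from polar to Cartesian coordinates in frequency space. The Fourier-Slice theorem (Theorem~\ref{thm:FourierSlice}) then converts $\mathcal F_1\mathcal R f$ into $\mathcal F_2 f$.

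First, by Fourier inversion in the first variable,
\[
S(s,\varphi) = (2\pi)^{-1/2}\int_\R e^{is\sigma}\, k(\abs{\sigma})\abs{\sigma}^{1-\alpha}\mathcal F_1\mathcal R f(\sigma,\varphi)\uprd\sigma .
\]
Inserting this into the definition \eqref{eqn:DefBP} of $\mathcal B$ with $s = x\cdot\theta$, where $\theta=(\cos\varphi,\sin\varphi)$, gives
\[
\mathcal B S(x) = \frac{1}{\pi}(2\pi)^{-1/2}\int_0^\pi\int_\R e^{i\sigma x\cdot\theta}\,k(\abs\sigma)\abs\sigma^{1-\alpha}\mathcal F_1\mathcal Rf(\sigma,\varphi)\uprd\sigma\uprd\varphi .
\]
The Fourier-Slice theorem yields $\mathcal F_1\mathcal Rf(\sigma,\varphi) = (2\pi)^{1/2}\mathcal F_2 f(\sigma\theta)$; the constant depends on the normalisation of the Fourier transforms, which I will match to the appendix.

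Next I substitute $\xi=\sigma\theta$. The pair $(\sigma,\varphi)\in\R\times[0,\pi)$ covers $\R^2$ exactly once, up to a null set, and the Jacobian is $\uprd\xi = \abs\sigma\uprd\sigma\uprd\varphi$. The factor $\abs\sigma^{1-\alpha}$ therefore becomes $\norm\xi^{-\alpha}$, and we obtain
\[
\mathcal B S(x) = \frac1\pi\int_{\R^2} e^{ix\cdot\xi}\,k(\norm\xi)\norm\xi^{-\alpha}\mathcal F_2 f(\xi)\uprd\xi = \frac{2\pi}{\pi}\,\mathcal F_2^{-1}\!\left(k(\norm\cdot)\norm\cdot^{-\alpha}\mathcal F_2 f\right)(x).
\]
Applying $\mathcal F_2$ to both sides gives $\mathcal F_2(\mathcal BS) = 2\,k(\norm\xi)\norm\xi^{-\alpha}\mathcal F_2 f$, which is exactly the claimed identity with the factor $\frac12$.

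The main obstacle is justifying the manipulations rather than the algebra itself. Here $\alpha<2$ matters: near $\xi=0$, the function $k(\norm\xi)\norm\xi^{-\alpha}\mathcal F_2 f(\xi)$ is locally integrable in $\R^2$, because $\int_0 r^{1-\alpha}\uprd r<\infty$. It decays rapidly at infinity since $k$ and $\mathcal F_2 f$ are Schwartz. The function is therefore in $L^1(\R^2)$, which justifies Fubini and the polar change of variables. The same integrability gives $\abs\sigma^{1-\alpha}\in L^1_{\mathrm{loc}}$, since $1-\alpha>-1$, so $\mathcal F_1 S$ is integrable and Fourier inversion for $S$ is legitimate. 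In this setting $S$, and hence $\mathcal BS$, need not be Schwartz, so $\mathcal F_2(\mathcal BS)$ should be understood as the Fourier transform of the bounded continuous function $\mathcal BS$. This is consistent with $\mathcal BS$ being the inverse transform of an $L^1$ function, and the final identity then holds pointwise for $\xi\neq0$.
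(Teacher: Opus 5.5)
Your proposal is correct and follows essentially the same route as the paper: Fourier inversion of $S$ inside the backprojection, then the Fourier-slice theorem, then the change from $(\sigma,\varphi)\in\R\times[0,\pi)$ to Cartesian frequency coordinates with Jacobian $\abs{\sigma}$, after which the integral is recognised as $2\,\mathcal F_2^{-1}$ of $k(\norm{\xi})\norm{\xi}^{-\alpha}\mathcal F_2 f$. The differences are only your unitary normalisation and your added rigour: the appendix's constant-free Fourier-slice theorem corresponds to the non-unitary convention behind the paper's prefactors $\frac{1}{2\pi}$ and $\frac{1}{(2\pi)^2}$, but the factor $2$ does not depend on the convention, and your $L^1$ justification via $\alpha<2$ and your remark that $S$ need not be Schwartz are both omitted from the paper's purely formal computation.
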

\begin{proof}
	With help of the Fourier Slice theorem, we see directly that 
	\begin{align*}
	\frac 1 2 \mathcal B (S)(x, y)	& =\frac 1 {2\pi} \int_0^\pi S(x \cos \varphi + y \sin\varphi)\, \mathrm d \varphi  \\
		&= \frac 1 {(2\pi)^2} \int_0^\pi \int_\R e^{i(x\cos\varphi + y \sin \varphi)r} k(\abs r)\abs{r} \abs{r}^{-\alpha}\underbrace{\mathcal F_1\mathcal Rf(r, \varphi)}_{=\mathcal F_2 f(r\cos \varphi, r \sin \varphi)} \mathrm d r \mathrm d \varphi \\
		&= \frac 1 {(2\pi)^2} \int_{\R^2} e^{i(x\xi_1 + y \xi_2)}k(\norm{\xi})\norm{\xi}^{-\alpha}\mathcal F_2 f(\xi)\,\mathrm d(x,y)\\
		& = \mathcal F_2^{-1} \left( \xi \mapsto k\left(\norm{\xi}\right)\norm{\xi}^{-\alpha} \mathcal F_2 f(\xi)\right),
	\end{align*}
	showing the claim.
\end{proof}
This also holds for $L^p$ spaces of appropriate order. In particular in the $L^p$ setting, fixing $\alpha = 0$ and $k \equiv 1$, we receive 
\begin{equation}
	f(\xi) = \frac 1 2 \mathcal B \left(\mathcal F_1^{-1}\left[ \abs s \mathcal F_1\mathcal Rf(s, \varphi) \right]\right)(\xi).
\end{equation} This formula is numerically unstable as 
the spectrum of the measurement $\mathcal Rf$ is filtered with the absolute value function.
Thus, the noise is amplified arbitrarily much
such that  a low-pass filter is needed in order to avoid the amplification of  high-frequency noise components. Using a low-pass filter $F_L \in \mathscr S(\R)$ with  $\mathrm{supp}(F_L) \subset [-L, L]$ instead of the absolute value, we arrive at
\begin{align}
	f(\xi) &= \frac 1 2 \mathcal B \left(\mathcal F_1^{-1}\left[ F_L(s) \mathcal F_1\mathcal Rf(s, \varphi) \right]\right)(\xi) = \frac 1 2 \mathcal B \left(\mathcal F_1^{-1} F_L(s) *_1 \mathcal Rf(s, \varphi) \right)(\xi),
\end{align}
which, discretised appropriately, is the 
formula of the filtered backprojection. Note that typically $F_L \in L^\infty(\R)$. We can similarly use $k$ in the above theorem to stabilise the reconstruction. For arbitrary $k\in \mathscr S(\R)$, we obtain for $\alpha = 0$ the  formula
\begin{equation}\label{eqn:FBPandKonv}
	\mathcal F_2^{-1} k(\norm \xi) *_2 f(\xi) = \frac 1 2 \mathcal B \left(\mathcal F_1^{-1} F_L(s) *_1 \mathcal F^{-1}k(\abs s) *_1 \mathcal Rf(s, \varphi) \right)(\xi).
\end{equation}
Now we can cite the desired result for the MPI operator \cite{BlankeBrandt_2023}. Under idealised settings, the signal $u(t)$ can be represented via
	\begin{equation}
		u(t) = \left[K_1 \circ \mathcal R + K_2 \circ \tilde {\mathcal R} + K_3 \circ \mathcal R\right] c(t), \label{eq:MPI_forwardmodel}
	\end{equation}
	where $K_i$ are convolution operators. This means that $K_i: \mathscr S(\R \times [0, \pi)) \to \mathscr S(\R)$ is given with help of appropriate functions $k_i \in \mathscr S(\R)$ as
	\[K_i \circ f (t) = \left[k_i( \cdot) * f(\cdot, \varphi(t))\right] (s(t)).\]
The function $s(t)$ denotes the displacement of the FFL from the origin and $\varphi(t)$ its angle to the $y$-axis.
See again Figure~\ref{fig:FFL} for a sketch of the movement of the FFL. 
The preceding result \cite[Theorem 1]{Knopp_2011} assumes sequential line rotation. In this case and in the above notation we have 
\[u(t) = K_1\circ \mathcal R (c(t)).\]

Reading $k$ as a convolution kernel, Theorem~\ref{thm:MPI_FBPConv} allows us to change the order of operations reconstructing MPI data. Equation~\eqref{eqn:FBPandKonv} tells us that the filtered backprojection of MPI data, i.e. the right-hand side, is equal to the convoluted reconstruction. This means we can first invert the Radon transform and afterwards deconvolve which is counter-intuitive to the order of regularisation coming from the order of operations given in~\eqref{eq:MPI_forwardmodel}. However, this allows us to deconvolve in image space instead of the sinogram space which is computationally beneficial.

\subsection{Experiment 1: Model-based MPI Reconstruction}\label{sec:NumerikMPI}

We want to test the reconstruction scheme stated above on simulated MPI data. 

	Neglecting two summands in \eqref{eq:MPI_forwardmodel}, the simplified forward  model is given by  the composed operator 
	\[A = K \circ \mathcal R\,.\]
	 In \cite{Blanke_2022} it was shown that the different addends are of different magnitude under reasonable assumptions  which allows for this simplification. The convolution operator has the form 
	\begin{align*}
		Kf(t) &= s(t) \left[\bar{\mathbf m}(G \cdot) * f(\cdot, \varphi(t))\right](s(t)) 
	\end{align*}
	with convolution kernel  $\bar{\mathbf m} = C \mathcal L'( C \cdot)$ for an appropriate constant $C$. 
	A further regularisation step is necessary to treat the factor $s(t)$. 
We apply Tikhonov regularisation to address this factor. A visualisation of the kernel $\mathbf{\bar m}$ is depicted in Figure~\ref{fig:LangevinDer} for different constants $C$.
	
	It is computationally costly to apply the regularisation as suggested above, i.e. first deconvolve and afterwards use a regularisation for the Radon transform, as the measurement is sampled along a curve in sinogram space which does not correspond to a regular grid. In terms of Theorem~\ref{thm:MPI_FBPConv} we may change this order of operation, when we set the function $k$ as the inverse Fourier transform of the convolution kernel of operator $K$ leading to
	\begin{equation}\label{eqn:MPI_kernelformula}
		Af(t) = \left[ \mathcal F^{-1}_1 k * \mathcal R f( \cdot, \varphi(t) \right](s(t)). 
	\end{equation}
One possibility is to use the backproject-filter method (BPF) as a regularisation for the Radon transform. Here, we first backproject the measurement and afterwards filter it. The occurring regularisation parameter is the inverse of the band width of the filter. This saves on the computational cost in the same way, as the result of the backprojection is given on a regular grid and both the filtering step as well as the deconvolution step afterwards are computationally cheap.
We sketch the BPF algorithm in Algorithm~\ref{alg:BFP}. We want to remind the reader of both the formula of the backprojection~\eqref{eqn:DefBP}
and the way the signal $u(t)$ is given on a curve in sinogram space
\[u(t) = h\left( s(t), \varphi(t)\right),\]
where $h:\R \times [0, \pi) \to \R$ is an appropriate function and  $\Gamma(t) = (s(t), \varphi(t))$ is the excitation function of the MPI scanner composed of the distance $s$ of the FFL to the origin and the angle of the FFL $\varphi$ to the $y$-axis.

\begin{algorithm}
	\caption{Backproject-Filter Method}\label{alg:BFP}
	\begin{algorithmic}[1]
		\Require MPI excitation function $\Gamma(t) = (s(t), \varphi(t))$, Grid $(x_i, y_i)_i$ for evaluation, low-pass filter $F_L$
		\For{i = 1,...}
		\State Compute $\tilde T_i$ such that for all $t\in \tilde T_i$ we have
		\[s(t) = x_i \cos \varphi(t) + y_i \sin \varphi (t).\]
		\State Approximate the backprojection with data $\left\{h(\Gamma(t))\right\}_{t \in \tilde T_i}$, for example via trapezoid rule
		\[\mathcal B h(x_i,y_i) \approx \frac 1 {\pi \abs{\tilde T_i}}\sum_{t\in \tilde T_i}h(\Gamma(t)),\]
		interpolating when necessary.
		\EndFor
		\State Filter the backprojected data
		\[\left\{f(x_i,y_i)\right\}_i \approx \left\{\mathcal F^{-1}F_L(\norm{(x_i,y_i)})\right\}_i * \left\{\mathcal B h(x_i,y_i)\right\}_i\]
		\Return $\left\{f(x_i,y_i)\right\}_i$
	\end{algorithmic}
\end{algorithm}

On the other hand, it is possible to apply a TSVD using the singular value decomposition of the Radon transform, see Theorem~\ref{thm:SVDRadon}. 
We approximate the inner product of the measurement with the corresponding eigenfunctions by the discrete sum of point-evaluations on the grid-points,
\begin{align*}
&\int_{[-10, 10] \times \mathbb S^1} w^{-1}(s, \varphi) f(s, \varphi) \overline{g(s, \varphi)}\uprd(s, \varphi) \\
&\quad\approx \sum_t w^{-1}(s(t), \varphi(t))\abs{s'(t)} f(s(t), \varphi(t)) \overline{g(s(t), \varphi(t))}.
\end{align*}
The correction $\abs{s'(t)}$ is needed to compensate that the samples lie not on a uniform grid.

For testing purposes, we use a spiral shaped phantom simulated in the MFS5 software \cite{SoftwareVogel}. More precisely, this phantom is spread over 500 points along a spiral of minimal radius 1 with 2 loops and a maximal radius of 7. To this simulated data, we added Gaussian noise with zero mean and standard deviation of $10\%$ of the amplitude of the signal.

As a benchmark method, we use a reconstruction from a system matrix approach. Here, we regularise the operator $A^\dagger$. The reconstruction is computed with help of the regularised normal equation with regularisation parameter $\lambda >0$,
\[(A^*A + \lambda I)x = A^*z^\delta,\] solved via the Kaczmarz algorithm \cite{Kaczmarz_1937} implemented in Julia \cite{RegularizedLeastSquares.jl, julia}. In MPI reconstruction, the Kaczmarz algorithm is a standard procedure, as typically, system matrices are 
close to orthogonal and the algorithm converges quickly. Moreover, using early stopping as an additional regularisation is possible, and thus the procedure is computationally very cheap. To save on memory and reduce the computational load, the columns of the matrix are Fourier-transformed and we only use the $330$ frequencies which contribute most to the signal.

In total, we conduct 5 tests for the MPI example, see Figure \ref{fig:recosMPI}. First, we do the benchmark reconstruction (a) with the Kaczmarz algorithm iterating 200 times to ensure convergence. 
Applying the Fourier transformation and the frequency selection  amplified the noise level. 
The regularisation parameter was chosen with the discrepancy principle of Morozov using the hyperparameters of Table~\ref{table:Hyper_parameters_MPI} and the exact noise level of the Fourier-transformed data. The selected parameter was $\lambda = 1.3411\mathrm{e}{-4}$. 

{\renewcommand{\arraystretch}{1.2}
\begin{table}
\centering
\begin{tabular}{|l|c|c|c||c|c|c|c|c|c|}
	\hline 
	& \multicolumn{3}{l||}{Morozov principle} & \multicolumn{6}{c|}{Double discrepancy principle}\tabularnewline
	\hline 
	Parameter & $\tau$ & $\vartheta$ & $\lambda_{0}$ & $\tau^{G}$ & $\tau^{F}\vphantom{\sum_{k}^{L}}$ & $\vartheta^{G}$ & $\vartheta^{F}$ & $\alpha_{0}$ & $\gamma_{0}$\tabularnewline
	\hline 
	Value & $1.05$ & $0.7$ & $1$ & $1.05$ & $0.2$ & $0.7$ & $0.7$ & $1$ & $10^{7}$\tabularnewline
	\hline 
\end{tabular}
\caption{Hyperparameters for the parameter selection in the MPI experiment.}
\label{table:Hyper_parameters_MPI}
\end{table}
}

{\renewcommand{\arraystretch}{1.2}
\begin{table}
	\centering
	\begin{tabular}{|c|c||c|c|c|c|c|}
		\hline 
		\multicolumn{2}{|c||}{Tikhonov} &  & \multicolumn{2}{c|}{TSVD} & \multicolumn{2}{c|}{BPF method (Alg. 1)}\tabularnewline
		\hline 
		& (a) & kernel & (b) analytical & (c) approximated & (d) analytical & (e) approximated\tabularnewline
		\hline 
		$\lambda^{*}$ & $1.34\textup{e}-4$ & $\alpha^{*}$ & $0.12$ & $0.12$ & $0.24$ & $0.24$\tabularnewline
		\hline 
		&  & $\gamma^{*}$ & $33232$ & $3.81\textup{e}-10$ & $23263$ & $3.81\textup{e}-10$\tabularnewline
		\hline 
	\end{tabular}

	\caption{Regularisation parameters selected by Morozov's principle in case of the Tikhonov regularisation  and the double discrepancy principle for the double  regularisation for inverting the Radon transform and convolution.}
	\label{table:params_morozov_MPI}

\end{table}
}

For the next tests, we use a TSVD on the Radon transform and once deconvolve with a kernel derived analytically in \cite{BlankeBrandt_2023} (b) and once with the kernel obtained  by simulating a phantom consisting of a single voxel (c). In the last tests we regularise the Radon transform with the BPF-method sketched in Algorithm~\ref{alg:BFP} and regularise the convolution in the same way as described above, respectively (d) and (e). In Tests (b) to (e), we used the double discrepancy principle to determine the regularisation parameters. The hyperparameters for the double discrepancy principle can be found in Table~\ref{table:Hyper_parameters_MPI} and the selected regularisation parameters  can be found in Table~\ref{table:params_morozov_MPI}.

To get rid of the time-dependant factor $s(t)$, we applied Tikhonov regularisation with fixed regularisation parameter $0.05$ to the data for  tests (b)-(e), which amplified the Gaussian noise on the signal. With this amplified noise-level, we tested the double discrepancy principle with the respective regularisation methods. 

All reconstructions are visualised in Figure~\ref{fig:recosMPI}. For all images, only the positive part was plotted.

	\begin{figure}[h!]
\centering
\begin{subfigure}{\linewidth}
	\centering
	\includegraphics[]{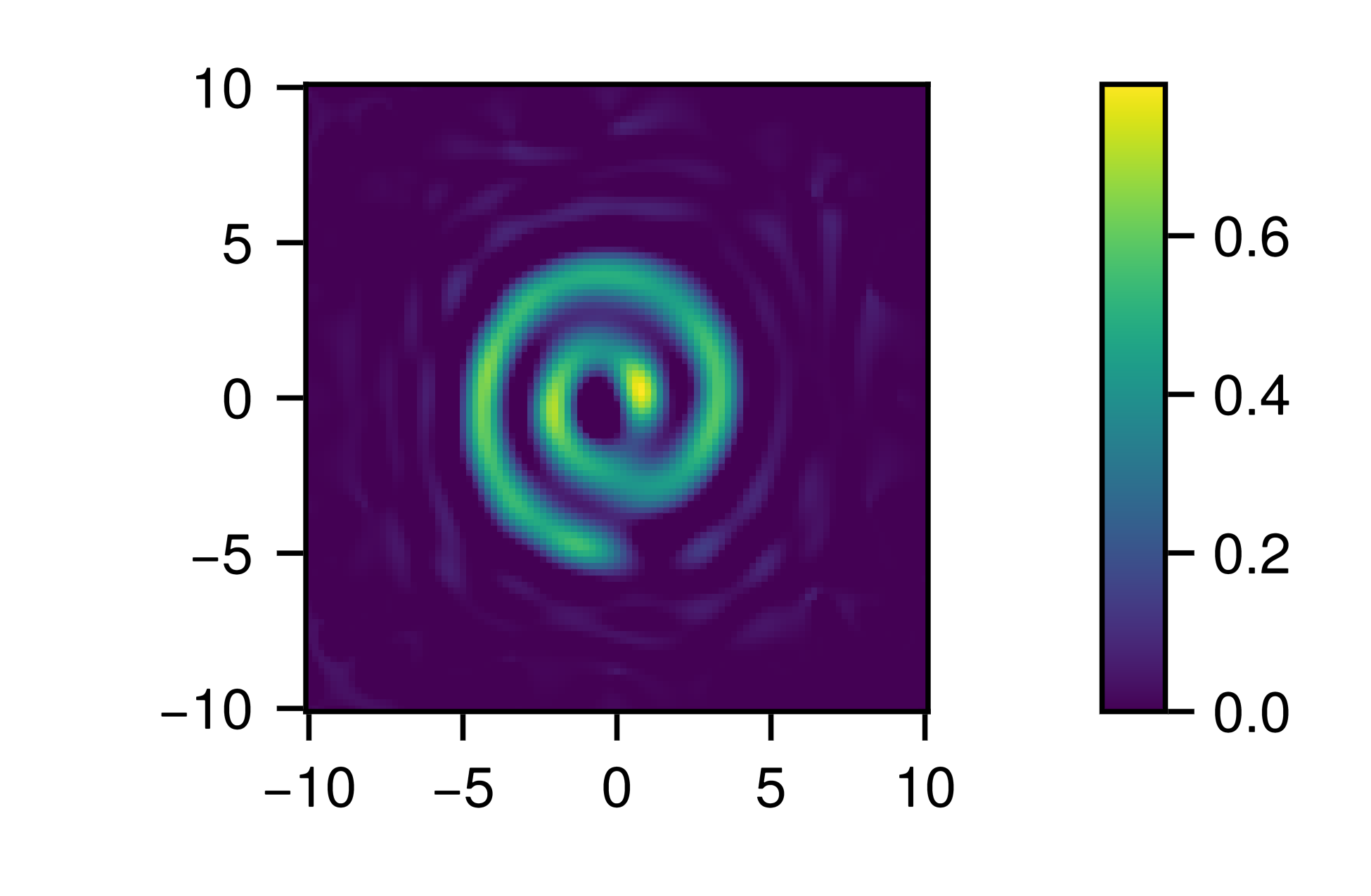}
	\subcaption{Reference reconstruction using 
		Kaczmarz algorithm and Morozov-discrepancy principle}
\end{subfigure}
\begin{subfigure}{0.45\linewidth}
	\centering
	\includegraphics[]{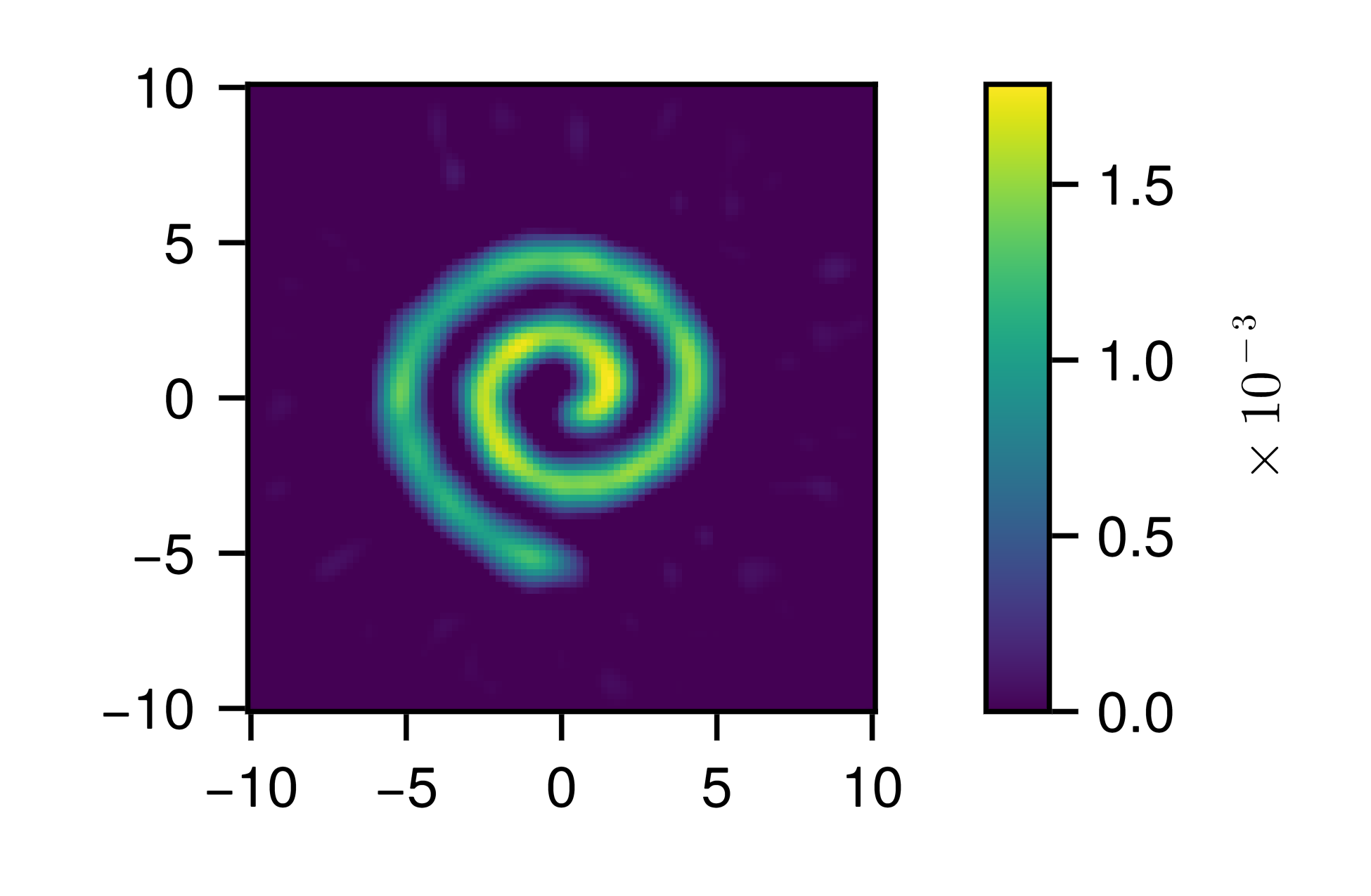}
	\subcaption{TSVD and analytical kernel}
\end{subfigure}\begin{subfigure}{0.45\linewidth}
	\centering
	\includegraphics[]{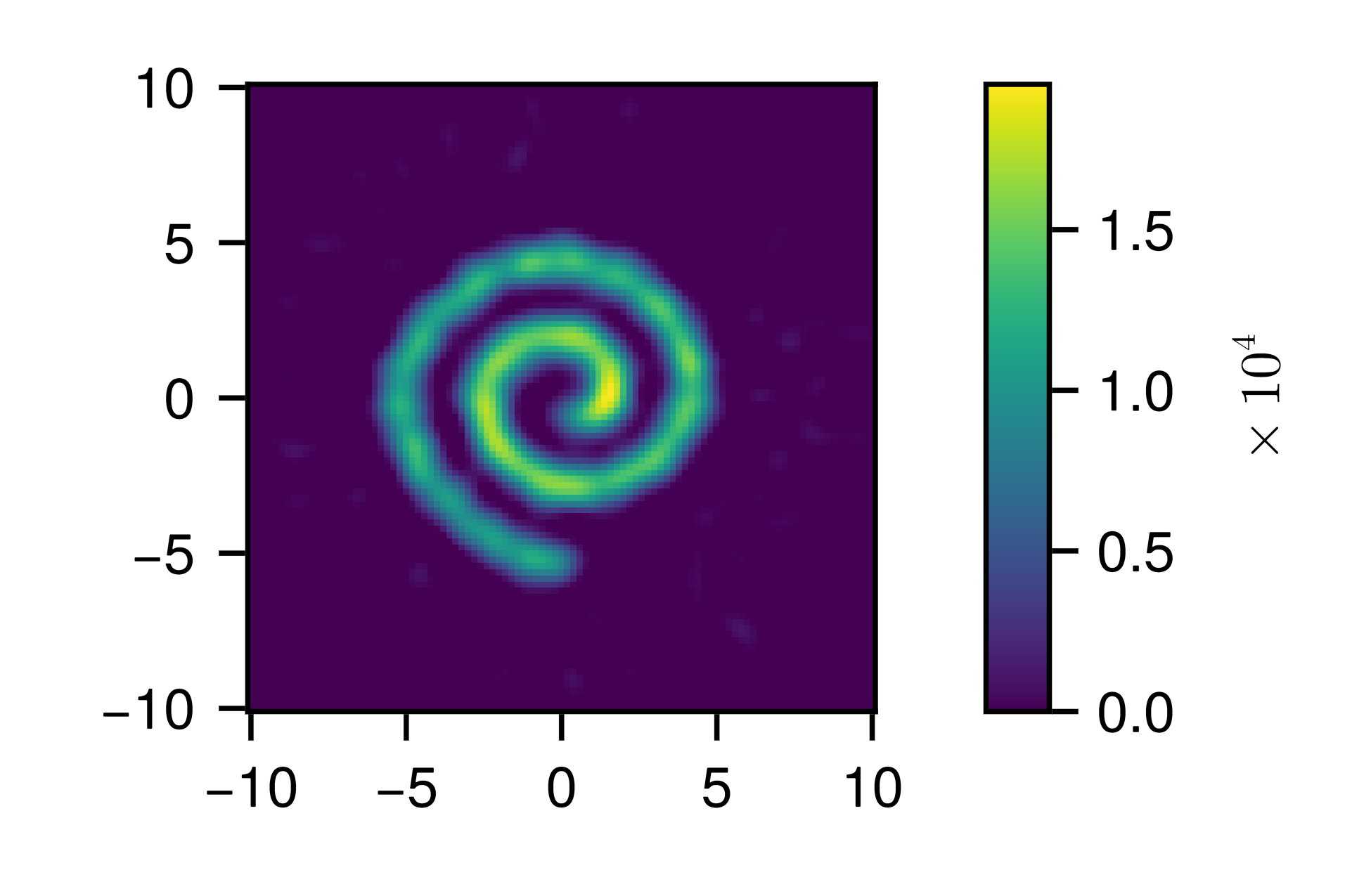}
	\subcaption{TSVD and approximated kernel}
\end{subfigure}
\begin{subfigure}{0.45\linewidth}
	\centering
	\includegraphics[]{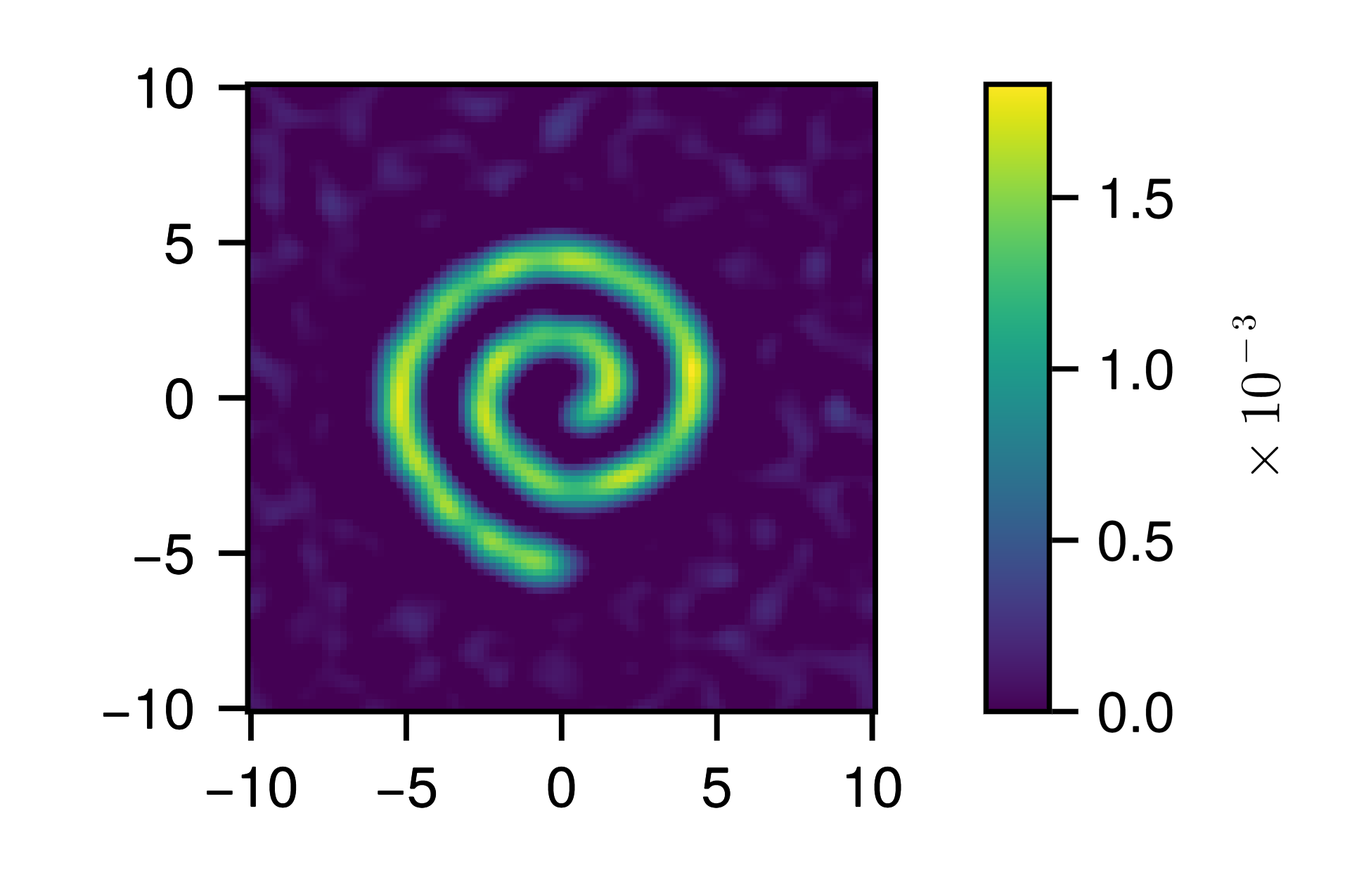}
	\subcaption{Alg~\ref{alg:BFP} and analytical kernel}
\end{subfigure}\begin{subfigure}{0.45\linewidth}
	\centering
	\includegraphics[]{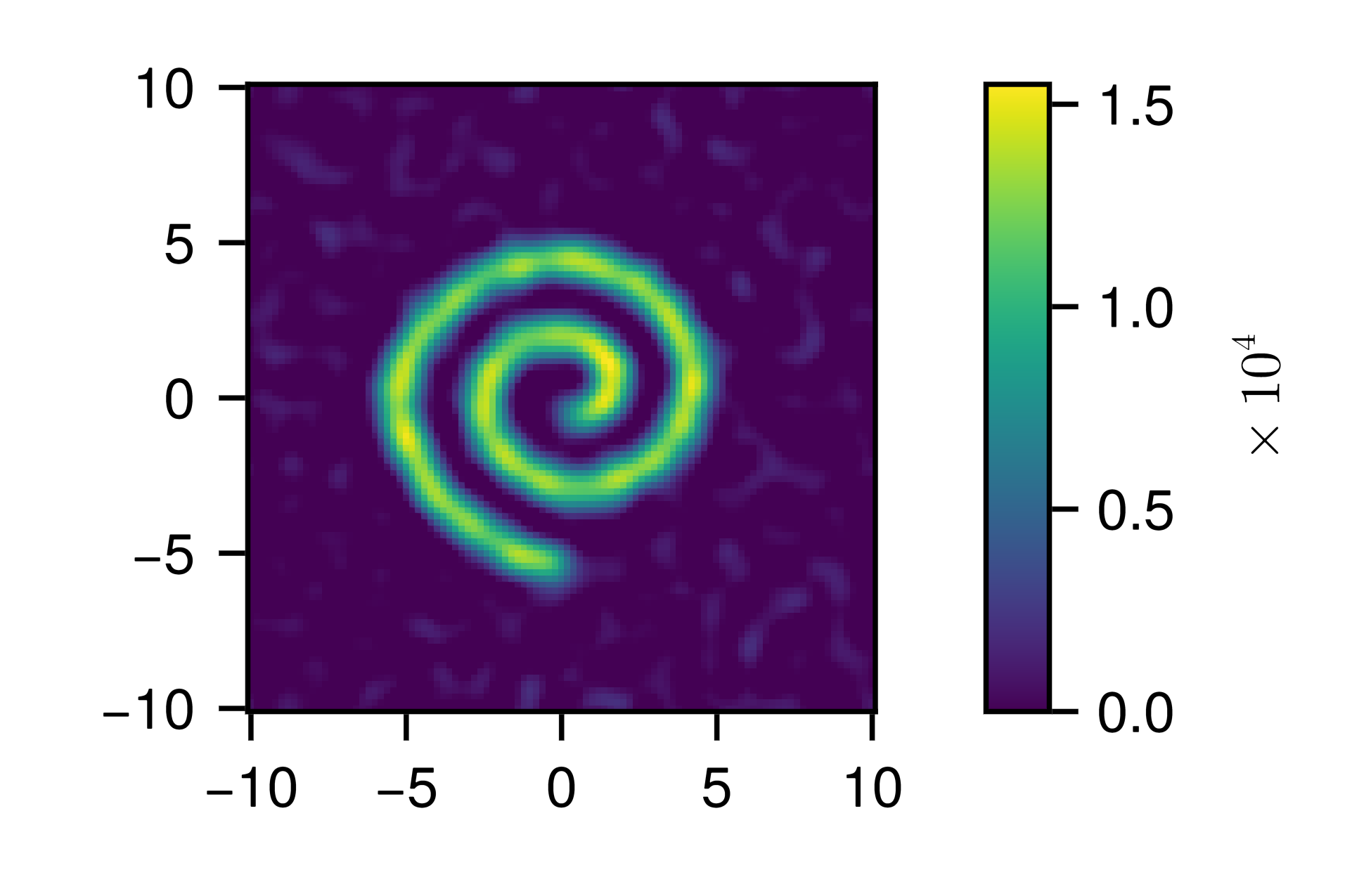}
	\subcaption{Alg~\ref{alg:BFP} and approximated kernel}
\end{subfigure}
\caption{Reconstructions of the spiral phantom. Reconstructions (b) to (e) are obtained by reconstruction from Radon data and deconvolution afterwards with the indicated kernel. The regularisation parameters are determined with help of  the double discrepancy principle, see Table~\ref{table:Hyper_parameters_MPI} and Table~\ref{table:params_morozov_MPI}.}
\label{fig:recosMPI}
\end{figure}

In all reconstructions, the general shape of the phantom is recognisable. In the reference reconstruction, there is a spot at the inner start of the spiral depicted with higher intensity which does not correspond to the desired phantom. In the other four reconstructions, the spiral is also not of uniform intensity, but there is no singular spot with higher intensity. Especially the reconstructions based on Algorithm~\ref{alg:BFP}, i.e. tests (d) and (e), show a more uniform intensity.

In the reference reconstruction, there are noise artefacts visible. In the reconstructions with the TSVD of the Radon transform, i.e. subfigures (b) and (c), there is almost no noise visible in the background. The background of the reconstructions (d) and (e) is less noisy than the background of the reference reconstruction, too. It seems as if the regularisation method for the Radon transform heavily influences the noise level of the background of the reconstruction.

The order of magnitude of the voxel intensities varies across the reconstructions. The different order of magnitudes in the reconstructions (b) and (d) to reconstructions (c) and (e) are caused by the different kernels. They also vary in order of magnitude but are very similar apart from this, which explains the different order of magnitude but rather similar images.

The difference in the order of magnitude of the reconstruction kernels cause a different order of magnitude in the selected regularisation parameters, too. Nevertheless, the regularisation parameters $\gamma^*$ are similar when using the same regularisation scheme even if they are used after different regularisations for operator $G^\dagger$.

All in all, the reconstructions with the double discrepancy principle show remarkable prowess, giving much better visual results than the benchmark method, both in the depiction of the shape of the phantom as well as the amount of background noise. This happens in a stable way, such that the parameter of the subsequent regularisation $R^F_\gamma$ apparently is not influenced much by the choice of regularisation $R^G_\alpha$.

The performance of the double discrepancy principle is especially notable in the context that the BPF-method is no regularisation via spectral filtering and thus does not fulfil the Assumptions~\ref{ass_filter}. This may indicate that the regularisation property can be shown with more general assumptions.

\subsection{Experiment 2: Convergence speed with the DDP}\label{sec:NumerikConv}

In this section, the objective is the evaluation of the properties of the double discrepancy principle presented in Section ~\ref{sec:convSpeed} given source conditions. In the context of the previously used MPI example, source conditions are unknown and consequently, the related problem of convolution is used as test problem. 
With help of the generalised functional calculus, see Appendix~\ref{sec:App:functCalc}, we derive source conditions for a test example. For this, we employ the multiplication operator $M_hf = t \mapsto h(t)f(t)$.
\begin{example}\label{exp:TextProbConv}
Consider now the two functions 
\begin{align*}
h_G(t) &= \frac{1}{1 + t^2},\\
h_F(t) &= \exp(-\abs{t}).
\end{align*}

and set $Ax = x * h_G * h_F$. Then 
$x = \abs{A}^\nu z$ if and only if
\[\mathcal F x = M_{\abs{\mathcal F h_G \mathcal F h_F}^\nu} \mathcal F z.\]
Therefore  
$x \in \mathcal R(\abs{A}^\nu)$ if and only if 
\[ M_{\abs{\mathcal F h_G \mathcal F h_F}^{-\nu}}\mathcal F x\in \mathrm L^2(\R).\]

With the true solution $x^\dagger(t) = \exp(-\frac{t^2}2)$, we obtain 
$$(\mathcal Fh_G \mathcal Fh_F)^{-\nu } \mathcal F x^\dagger (\xi)= \sqrt{2\pi}(2\pi)^\nu(1 + \xi^2)^\nu \exp\left(-\frac{\xi^2}2 + \nu \abs \xi\right) \in \mathrm L^2(\R)$$ for all $\nu \geq 0$, i.e. $x^\dagger \in X^A_\nu$ for all $\nu >0$.
\end{example}

We set the operators $A = G\circ F$ with $Gx = x * h_G$ and $Fx = x * h_F$.
For numerical tests, we discretise the operator using the fast Fourier transform and $N = 2^{14}$ uniformly spaced sampling points on the interval $(-100, 100)$ and added component-wise i.i.d. normally distributed noise.

For the convolution experiment, we conduct two tests. In a first test, we used two Tikhonov regularisations 
for $F^\dagger$ and $G^\dagger$.  
In the second test, we applied TSVD as regularisation each. 
The parameters for the \nameMethode ~were chosen according to Table~\ref{table:params_morozov}. 

In Figure~\ref{fig:ConvSpeedPlots} 
the behaviour of the error $\norm{x^\dagger - x^\delta}$ for $\delta \to 0$ 
is plotted for both experiments.
An exemplary reconstruction using TSVD regularisations and error $z^\dagger - z^\delta \sim \mathcal N(0, 0.0001 I)$, i.e. with standard deviation $\sigma = 0.01$, can be found in Figure~\ref{fig:recoConv}. This corresponds to an error level of around $\delta \approx 1.27$. The exact value of $\delta$ was subsequently used in the regularisation.

\begin{figure}
	\begin{subfigure}{0.45\linewidth}
	\centering
	\includegraphics[]{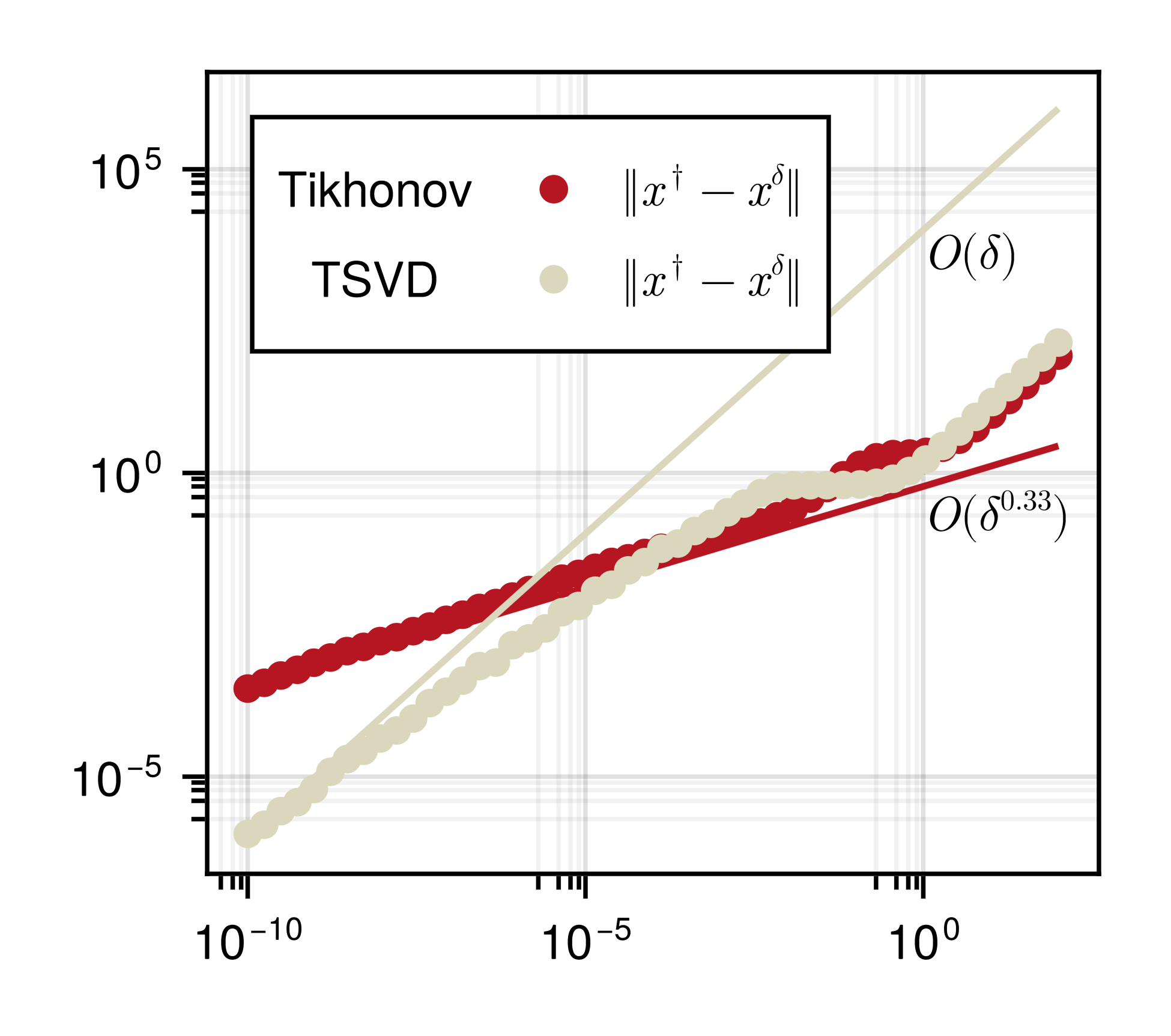}
	\caption{Convergence analysis of the \nameMethode}	\label{fig:ConvSpeedPlots}
\end{subfigure}\hfill\begin{subfigure}{0.45\linewidth}
\centering
	\includegraphics[]{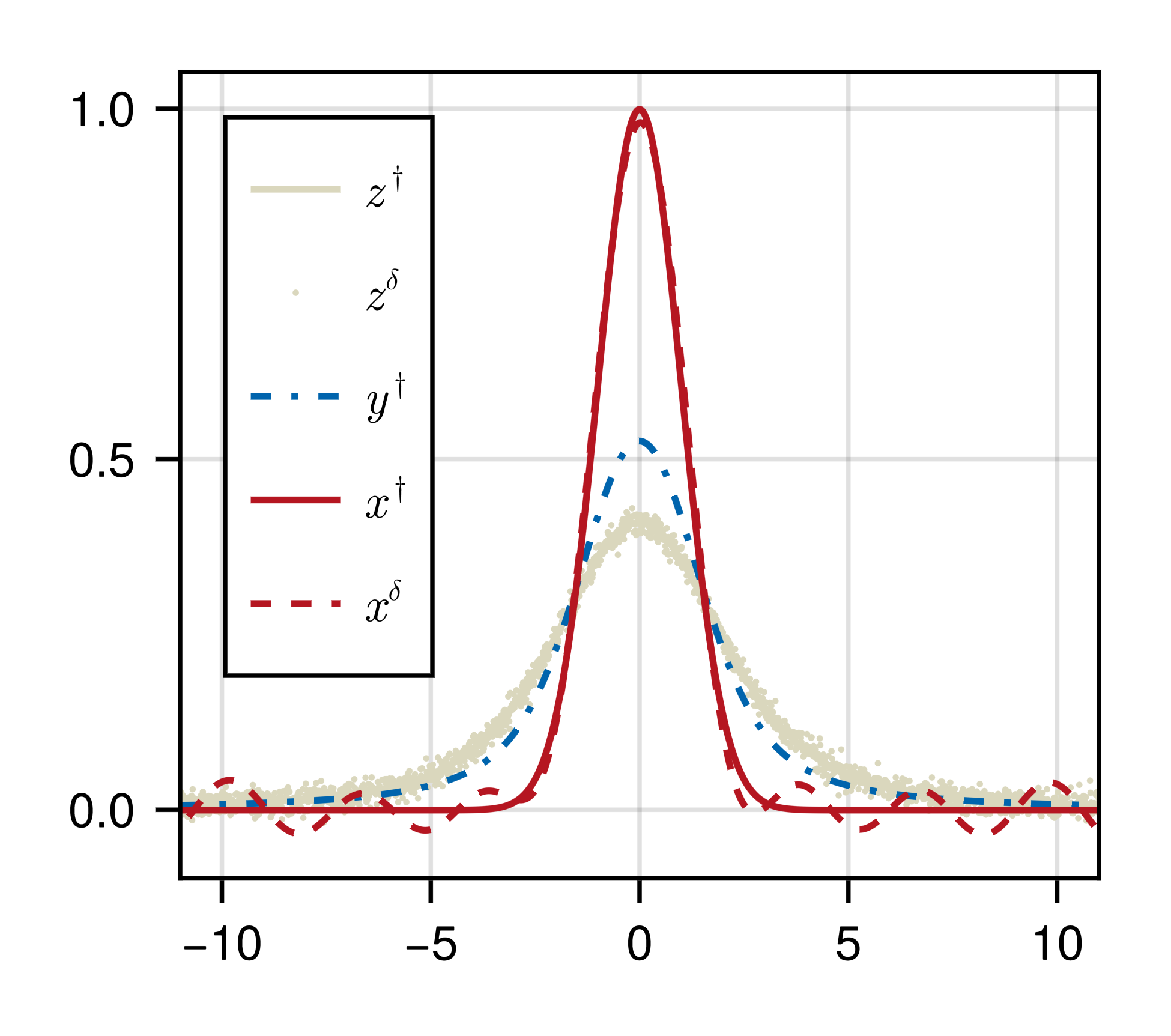}
	\caption{Exemplary reconstruction using the \nameMethode.}
	\label{fig:recoConv}
\end{subfigure}
\caption{Analysis of the DDP for the convolution experiment.}
\end{figure}

The reconstruction of $y$ from $z^\delta$ was successful, with the capture of the peak at $t=0$ and only minor oscillation occurring outside the peak. 
Nevertheless, the reconstruction from $y^\delta$ amplifies\ these oscillations along the flat portion of $x$. This observation may indicate that the regularisation parameter has been selected at a suboptimal level. 
However, the peak of $x^\delta$ is less than the peak of $x^\dagger$. Therefore, it can be deduced that the  the regularisation already dampens the details of $x^\dagger$, i.e. the regularisation parameter is chosen to be too high.

As illustrated in Figure~\ref{fig:ConvSpeedPlots}, the parameter choice rule results in convergence with both regularisations. In accordance with the established theoretical framework, the TSVD regularisation method, characterised by its infinite qualification of the filter, exhibited superior convergence properties in comparison to the Tikhonov regularisation approach, which does not possess this property. 
The numerical results indicate a convergence speed of $\mathcal O (\delta^{0.4})$ for Tikhonov regularisation and an order of $\mathcal O(\delta^{0.8})$ for the TSVD regularisation. According to the established result in Theorem~\ref{thm:RegMorozov}, the order is predicted to be $\delta^\frac{\nu_0 -1}{\nu_0 +1}$. In case of  the Tikhonov regularisation with $\nu_0 = 2$, we  thus expect a rate $\delta^\frac{1}{3}$ and in case of the TSVD an order of $\delta$. We indicated these theoretical results in Figure~\ref{fig:ConvSpeedPlots}, too.

In total, the DDP performs better than expected on the Tikhonov example and worse than expected on the TSVD example.

\begin{table}
	\centering

	{\renewcommand{\arraystretch}{1.4}
	\begin{tabular}{|l|c|c|c|}
		\hline 
		Parameter & $\tau^{G}=\tau^{F}$ & $\vartheta^{G}=\vartheta^{F}$ & $\alpha_{0}=\gamma_{0}$\tabularnewline
		\hline 
		Value & $1.5$ & $0.9$ & $1000$\tabularnewline
		\hline 
	\end{tabular}}

	\caption{Used hyperparameters of the \nameMethode~ for the convolution experiment.}
	\label{table:params_morozov}
\end{table}

\section{Conclusion}\label{sec:Conclusion}

In this paper, we proposed a method to design regularisation methods for structured inverse problems with applications in imaging and machine learning. By combining existing regularisations, it is possible to use optimised methods. We further derived a-priori parameter choice rules if source conditions are known and the double discrepancy principle, an a-posteriori choice rule if they are not known. All parameter choice rules were analysed with regard to convergence speed, resulting in the expected, but suboptimal result of $\mathcal O\left(\delta^{\left(\frac{\nu}{\nu+1}\right)^2}\right)$ for the a-priori choice rules. It remains an open question whether this suboptimal convergence speed is the optimal speed for this setting. Further, the double discrepancy principle is a generalisation of the discrepancy principle and has a suboptimal result of $\mathcal O\left( \delta^\frac{\nu-1}{\nu+1}\right)$ for regularisations with finite qualification. Next research angles include  the use of $\delta$-free parameter choice rules such as the $L$-curve method for this composite setting.

We tested the double discrepancy principle on a simulated MPI problem comparing the reconstructions with different regularisation methods to a benchmark regularisation using a system matrix and the Kaczmarz-algorithm. This gave visually great results, regardless of the regularisation methods and showed stability of the second chosen regularisation parameter independent of the regularisation method used in the step before. Moreover, the method performed well even for regularisation methods not covered in the Assumptions~\ref{ass_filter} used for the proof of the regularisation parameter, leading to future research angles. 

The convergence speed of the double discrepancy principle given source condition was explored
confirming the regularisation property derived theoretically and mostly confirming the convergence speed. In case of the Tikhonov regularisation with finite qualification, the method showed superior performance than expected, which needs further investigation.

\printbibliography
\newpage
\appendix
\section{Established Foundational Theory}
 
\subsection{Basics from Regularisation Theory}\label{subsec:RegThBasics}
The Picard-condition connects the source condition and the singular value decomposition of the operator in question.
\begin{thm}[Picard-Condition, \text{\cite[Prop 3.13]{EHN_1996}}]\label{thm:Picard}
Let $A : X \to Z$ linear and compact with singular value decomposition
$Ax = \sum _n \lambda _n\mu_n \SP{x}{x_n}z_n$ and $z \in \overline{\mathcal R(A)}$. Then $z\in \mathcal R(A)$ if and only if 
\[\sum_{n}\frac{\abs{\SP{z}{z_n}}^2}{\lambda_n^2\mu_n^2}<\infty.\]
\end{thm}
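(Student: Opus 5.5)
The plan is to argue directly from the singular value decomposition. Write $\sigma_n := \lambda_n\mu_n > 0$ for the singular values of $A$, with $\{x_n\}$ orthonormal in $X$ and $\{z_n\}$ orthonormal in $Z$, so that $Ax = \sum_n \sigma_n \SP{x}{x_n} z_n$. Only indices with $\sigma_n \neq 0$ appear in the sum, which is what the quotient in the Picard condition implicitly requires. Two structural facts will be used throughout:
\begin{enumerate}
\item $A^* z_n = \sigma_n x_n$, which follows by pairing the expansion of $Ax$ with $z_n$;
\item $\{z_n\}$ is an orthonormal basis of $\overline{\mathcal R(A)}$.
\end{enumerate}
Fact 2 holds because every $Ax$ lies in $\overline{\operatorname{span}}\{z_n\}$, and conversely each $z_n = A(x_n/\sigma_n)$ lies in $\mathcal R(A)$.

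For the direction $z \in \mathcal R(A) \Rightarrow$ the series converges, take $x \in X$ with $Ax = z$. By fact 1,
\[\SP{z}{z_n} = \SP{Ax}{z_n} = \SP{x}{A^* z_n} = \sigma_n \SP{x}{x_n}.\]
Hence
\[\sum_n \frac{\abs{\SP{z}{z_n}}^2}{\sigma_n^2} = \sum_n \abs{\SP{x}{x_n}}^2 \le \norm{x}^2 < \infty\]
by Bessel's inequality.

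For the converse, assume the series is finite and define $x := \sum_n \frac{\SP{z}{z_n}}{\sigma_n} x_n$. Because $\{x_n\}$ is orthonormal and the squared coefficients are summable, this series converges in $X$ by Riesz--Fischer. Since $A$ is bounded, it may be applied termwise to the partial sums, giving
\[Ax = \sum_n \SP{z}{z_n} z_n = \mathrm P_{\overline{\mathcal R(A)}}\, z,\]
where the second equality uses fact 2. Finally, the hypothesis $z \in \overline{\mathcal R(A)}$ gives $\mathrm P_{\overline{\mathcal R(A)}}\, z = z$, so $z = Ax \in \mathcal R(A)$.

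The only delicate point is fact 2, namely that $\{z_n\}$ is complete in $\overline{\mathcal R(A)}$ and not merely orthonormal. This is exactly where the assumption $z \in \overline{\mathcal R(A)}$ enters. Without that assumption the construction still produces an $x$ with $Ax = \mathrm P_{\overline{\mathcal R(A)}}\, z$, but $z$ itself need not lie in the range. Everything else consists of Bessel's inequality and the continuity of $A$.
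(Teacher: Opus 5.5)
Your proof is correct and is the standard argument behind the result the paper cites without proof: you use Bessel's inequality via $\SP{z}{z_n} = \lambda_n\mu_n\SP{x}{x_n}$ for necessity, and the explicit preimage $\sum_n \frac{\SP{z}{z_n}}{\lambda_n\mu_n}x_n$ together with $\mathrm P_{\overline{\mathcal R(A)}}z = z$ for sufficiency. One wording correction: the completeness of $\{z_n\}$ in $\overline{\mathcal R(A)}$ (your fact 2) holds independently of $z$, and the hypothesis $z \in \overline{\mathcal R(A)}$ is used only in the final step $\mathrm P_{\overline{\mathcal R(A)}}z = z$.
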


We apply the interpolation inequality in the proof of Theorem~\ref{thm:RegMorozov}.
\begin{thm}[Interpolation inequality \text{\cite[eqn. (2.49)]{EHN_1996}}]\label{eqn:IntEq}
For an operator $A: X \to Z$ and $x \in X$, the inequality
\begin{equation}
\norm{\abs{A}^rx} \leq \norm{\abs{A}^qx}^\frac r q\norm{x}^{1-\frac r q}
\end{equation}
applies for all $q > r > 0$.
\end{thm}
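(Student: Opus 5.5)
The plan is to reduce the inequality to Hölder's inequality for the spectral measure of the self-adjoint, nonnegative operator $\abs{A} = (A^*A)^{1/2}$. Since the paper mostly uses compact operators, I first carry out the argument in that setting with the singular value decomposition, then note the general bounded case.

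\emph{Compact case.} Write $\abs{A}$ as $\abs{A}x = \sum_n \sigma_n \SP{x}{x_n}x_n$ with singular values $\sigma_n > 0$. For any $s>0$ this gives
\[
\norm{\abs{A}^s x}^2 = \sum_n \sigma_n^{2s}\abs{\SP{x}{x_n}}^2 .
\]
The component of $x$ in $\mathcal N(A)$ does not contribute to either left-hand side. It only enlarges $\norm{x}$, so it can be dropped without weakening the claim, using $\sum_n \abs{\SP{x}{x_n}}^2 \leq \norm{x}^2$. Set $\beta_n := \abs{\SP{x}{x_n}}^2 \geq 0$ as weights. Apply the discrete Hölder inequality with exponents $p = q/r > 1$ and $p' = q/(q-r)$ to the product $\sigma_n^{2r}\cdot 1$:
\[
\sum_n \sigma_n^{2r}\beta_n \leq \Bigl(\sum_n \sigma_n^{2q}\beta_n\Bigr)^{\frac r q}\Bigl(\sum_n \beta_n\Bigr)^{1-\frac r q}.
\]
Bounding the last factor by $\norm{x}^{2(1-r/q)}$ and taking square roots yields $\norm{\abs{A}^r x} \leq \norm{\abs{A}^q x}^{r/q}\norm{x}^{1-r/q}$.

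\emph{General bounded case.} Replace the sums by integrals against the finite measure $\mathrm d\norm{E_\lambda x}^2$ on $[0,\norm{A}]$, where $E_\lambda$ is the spectral family of $\abs{A}$. Then $\norm{\abs{A}^s x}^2 = \int \lambda^{2s}\,\mathrm d\norm{E_\lambda x}^2$, the total mass of the measure is exactly $\norm{x}^2$, and the same Hölder step applies verbatim. If $A$ is unbounded, one restricts to $x \in \mathrm{dom}(\abs{A}^q)$; then the right-hand side is finite and the same computation works.

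The only delicate point is bookkeeping rather than a genuine obstacle: choosing the conjugate exponents so that the powers match. I pair $\lambda^{2r}$ with exponent $q/r$, giving $\lambda^{2q}$, and the constant function $1$ with exponent $q/(q-r)$. One must also check the edge cases. If $\abs{A}^q x = 0$, then the measure is concentrated at $\lambda = 0$, hence $\abs{A}^r x = 0$ as well and the inequality is trivial. If $x = 0$, both sides vanish.
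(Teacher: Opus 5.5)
Your proposal is correct and matches the standard argument behind this result. The paper gives no proof of its own, only the citation to \cite[eqn.~(2.49)]{EHN_1996}, where the inequality is obtained exactly as you do: Hölder's inequality with exponents $q/r$ and $q/(q-r)$ applied against the spectral measure $\mathrm d\norm{E_\lambda x}^2$, whose total mass is $\norm{x}^2$.
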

The following Lemma links the properties of the filter function with the norm of the related regularisation.
\begin{lem}[\text{\cite[Satz 3.3.3]{Rieder_2003}}]\label{lem:OpNormRF}
We have $\norm{R^F_s} \leq \sqrt{C_F \sup_{\lambda \in (0, \norm{F}^2]}\abs{f^F_s(\lambda)}} \leq \sqrt{C_FM_F s^{-1}}$.
\end{lem}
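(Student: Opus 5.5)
The statement to be proved is Lemma~\ref{lem:OpNormRF}, $\norm{R^F_s} \leq \sqrt{C_F \sup_{\lambda}\abs{f^F_s(\lambda)}}$. The plan is to compute the operator norm spectrally, using the singular value decomposition \eqref{eqn:SVD_FG} of $F$.

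\textbf{Step 1: reduce the norm to a supremum over singular values.} By definition,
\[R^F_s y = \sum_n f^F_s(\lambda_n^2)\lambda_n \SP{y}{y_n}x_n .\]
Since the $x_n$ are orthonormal, Parseval's identity gives
\[\norm{R^F_s y}^2 = \sum_n \abs{f^F_s(\lambda_n^2)}^2\lambda_n^2 \abs{\SP{y}{y_n}}^2 .\]
Bessel's inequality, $\sum_n \abs{\SP{y}{y_n}}^2\le\norm y^2$, then yields
\[\norm{R^F_s} \leq \sup_n \lambda_n \abs{f^F_s(\lambda_n^2)} \leq \sup_{\lambda\in(0,\norm F^2]} \sqrt{\lambda}\,\abs{f^F_s(\lambda)} .\]

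\textbf{Step 2: split the product and apply the assumptions.} Write
\[\sqrt{\lambda}\,\abs{f^F_s(\lambda)} = \sqrt{\lambda\abs{f^F_s(\lambda)}}\cdot\sqrt{\abs{f^F_s(\lambda)}} .\]
For the first factor, Assumption~\ref{ass_filter}(1b) gives $\lambda\abs{f^F_s(\lambda)}\le C_F$. For the second factor, bound it by its supremum over $\lambda\in(0,\norm F^2]$. This gives the first inequality of the lemma.

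\textbf{Step 3: insert the asymptotic filter bound.} Assumption~\ref{ass_filter}(2) states $\sup_\lambda \abs{f^F_s(\lambda)}\le M_F s^{-1}$. Substituting this into the result of Step 2 gives the second inequality, $\norm{R^F_s}\le\sqrt{C_F M_F s^{-1}}$.

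The only subtle point is that Assumption~\ref{ass_filter}(2) is stated only "for $s\to 0$". The second inequality is therefore meaningful, and the lemma is used in \eqref{eqn:NormRF}, only for sufficiently small $s$. I expect no other obstacle; the whole argument is a direct computation.
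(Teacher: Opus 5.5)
Your proof is correct and is the standard spectral argument behind \cite[Satz 3.3.3]{Rieder_2003}, which the paper quotes without proof: you bound $\norm{R^F_s}$ by $\sup_{\lambda}\sqrt{\lambda}\,\abs{f^F_s(\lambda)}$ via the singular system and split $\sqrt{\lambda}\,\abs{f^F_s(\lambda)}=\sqrt{\lambda\abs{f^F_s(\lambda)}}\,\sqrt{\abs{f^F_s(\lambda)}}$. Your caveat that Assumption~\ref{ass_filter}(2) is only asymptotic is apt, but your remark about how the lemma is used is slightly off: the case $\gamma=\gamma_1$ in \eqref{eqn:NormRF} is not a small-$s$ use, and there only the first inequality is needed, which gives a constant bound because $f^F_{\gamma_1}$ is bounded.
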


\subsection{Basics from Computer Tomography}

We present the Fourier Slice theorem connecting the Radon transform with the ground truth, which is paramount for proving reconstruction formulas.

\begin{thm}[Fourier-Slice Theorem \text{\cite{Natterer1986}}]\label{thm:FourierSlice}
	Let $f\in \mathscr S(\R^2)$. Then,
	\[\mathcal F_1 \mathcal Rf(s, \varphi) = \mathcal F_2 f(s\cos\varphi, s\sin\varphi).\]
\end{thm}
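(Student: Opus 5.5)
The plan is a direct computation: write out the one-dimensional Fourier transform of the Radon transform as an iterated integral, merge it into a single integral over $\R^2$ via Fubini, and undo the parametrisation of the lines by a rotation of coordinates. Throughout, I use the unnormalised convention $\mathcal F_1 g(\sigma) = \int_\R e^{-i\sigma s} g(s)\uprd s$ and $\mathcal F_2 f(\xi) = \int_{\R^2} e^{-i\xi\cdot x} f(x)\uprd x$. This is consistent with the factor $(2\pi)^{-2}$ appearing in the inverse transform in the proof of Theorem~\ref{thm:MPI_FBPConv}.

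First I fix the geometry. Put $\theta = (\cos\varphi, \sin\varphi)$ and $\theta^\perp = (-\sin\varphi, \cos\varphi)$, so that the line with normal direction $\theta$ at signed distance $s$ from the origin is $\set{s\theta + t\theta^\perp}{t\in\R}$. The Radon transform is then $\mathcal Rf(s,\varphi) = \int_\R f(s\theta + t\theta^\perp)\uprd t$. This is the parametrisation used for $\tilde{\mathcal R}$, and it is what the definition of $\mathcal R$ is meant to express; the sign in the first component there should read $-t\sin\varphi$.

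I read this as a typo and do not take the formula literally. The reason is that the map $(s,t)\mapsto (s\cos\varphi + t\sin\varphi,\, s\sin\varphi + t\cos\varphi)$ has Jacobian $\cos 2\varphi$. It is therefore not an isometry, and the statement would be false as written. Fixing this convention is the only real subtlety, so I regard it as the main obstacle. The analytic steps below are routine.

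Next I compute, for fixed $\varphi$,
\[
\mathcal F_1\mathcal Rf(\sigma,\varphi) = \int_\R e^{-i\sigma s}\int_\R f(s\theta + t\theta^\perp)\uprd t\uprd s .
\]
Since $f\in\mathscr S(\R^2)$, the function $(s,t)\mapsto f(s\theta+t\theta^\perp)$ is absolutely integrable on $\R^2$. It is bounded by $C(1+s^2+t^2)^{-2}$, because the substitution is norm-preserving. Fubini therefore allows me to treat the expression as a single integral over $(s,t)\in\R^2$.

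Then I substitute $x = s\theta + t\theta^\perp$. This map is a rotation, so it has $\abs{\det} = 1$, and its inverse gives $s = x\cdot\theta$. The integral becomes
\[
\int_{\R^2} e^{-i\sigma\, x\cdot\theta} f(x)\uprd x = \mathcal F_2 f(\sigma\theta) = \mathcal F_2 f(\sigma\cos\varphi, \sigma\sin\varphi).
\]
Renaming $\sigma$ as $s$ gives exactly the claimed identity.

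If a different normalisation of the Fourier transforms were intended, the same computation goes through. In that case I would only need to check that the constants in front of $\mathcal F_1$ and $\mathcal F_2$ agree with those used in Theorem~\ref{thm:MPI_FBPConv}.
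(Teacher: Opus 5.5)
Your proof is correct. It is the standard argument from the cited source \cite{Natterer1986}: the paper quotes the theorem without proof, and that argument is Fubini followed by the unit-Jacobian change of variables $x = s\theta + t\theta^\perp$. With the unnormalised convention implied by the $(2\pi)^{-2}$ in the proof of Theorem~\ref{thm:MPI_FBPConv}, no constant appears, and your reading of $\mathcal R$ with $-t\sin\varphi$ in the first component (matching $\tilde{\mathcal R}$) is justified, since the literal parametrisation has Jacobian $\cos 2\varphi$ and the identity fails for it.
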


In order to use a regularisation for the Radon transform using filter methods, the singular value decomposition is needed. To state this, we need the weighted Hilbert space 
\[\left( \mathrm L^2( [-1, 1] \times \mathbb S^1), \norm \cdot _ {w^{-1}}\right)\]
with weight function $w(s, \varphi) = \sqrt{1-s^2}$, i.e. with the norm
\[\norm{f}_{w^{-1}}^2 = \int_{-1}^1 \int_{\mathbb S^1}\frac{\abs{f(s, \varphi)}^2}{w(s, \varphi)}\uprd \varphi \uprd s.\]
For this weighted Hilbert spaces, the singular value decomposition is explicitly known \cite{Davidson_1983, Louis_1984}.
\begin{thm}[\text{\cite[Satz 2.5.3]{Rieder_2003}}]\label{thm:SVDRadon}
The Radon transform $\mathcal R : \mathrm L^2(\Omega) \to \left( \mathrm L^2( [-1, 1] \times \mathbb S^1), \norm \cdot _ {w^{-1}}\right)$ on the domain $\Omega = \{(x,y) \in \R^2 \mid \norm{(x,y)^T}\leq 1\}$ has  the singular value decomposition 
\[\left\{ \sigma_{m, l}, v_{m, l}, u_{m, l} \mid m \in \N_0, l \in \Z, \abs l \leq m, m + l \in 2 \Z  \right\}\]
with
\begin{itemize}
\item singular value $\sigma_m = \sigma_{m,l} = 2 \sqrt{\frac{\pi}{m+1}}$ of multiplicity $m+1$,
\item the orthogonal basis of $\ker(\mathcal R)^\perp$, 
\[v_{m, l}(x,y) = \sqrt{\frac{m+1}{\pi}} \exp(i l \arg(x + iy)) \norm{(x,y)^T}^{\abs{l}} P^{(0, \abs{l})}_{\frac{m - \abs{l}}{2}}\left( 2 \norm{(x,y)^T}^2 -1\right),\]
\item and the orthogonal basis of $\mathrm{ran}(\mathcal R)$,
\[u_{m,l}(s, \varphi) = \frac 1 \pi \exp(i l \varphi) w(s) U_m(s).\]
\end{itemize}
We denoted by $U_m$ the Chebyshev polynomials of second kind  and by $P^{(\alpha, \beta)}_n$ the Jakobi polynomials .
\end{thm}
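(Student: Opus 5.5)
We prove the statement by checking directly that $\{v_{m,l}\}$ and $\{u_{m,l}\}$ are orthonormal systems and that $\mathcal R v_{m,l}=\sigma_m u_{m,l}$, then adding completeness. We work in polar coordinates $(r,\psi)$ on $\Omega$ and in $(s,\varphi)$ on the data side. The adjoint of $\mathcal R$ with respect to the weighted norm $\norm{\cdot}_{w^{-1}}$ is $\mathcal R^* g(x)=\int_{\mathbb S^1} w^{-1}(x\cdot\theta)\,g(x\cdot\theta,\theta)\uprd\varphi$ with $\theta=(\cos\varphi,\sin\varphi)$. This is a weighted version of the backprojection \eqref{eqn:DefBP}.

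\textbf{Step 1: orthonormality.} For the $v_{m,l}$, the angular factor $e^{il\psi}$ separates different $l$. For fixed $l$, the substitution $t=2r^2-1$ turns $\int_0^1 r^{2\abs l}P^{(0,\abs l)}_j(2r^2-1)P^{(0,\abs l)}_k(2r^2-1)\,r\uprd r$ into the Jacobi orthogonality integral with weight $(1+t)^{\abs l}$. This fixes the factor $\sqrt{(m+1)/\pi}$. For the $u_{m,l}$, the weight cancels: $\norm{w U_m e^{il\varphi}}^2_{w^{-1}}=\int_{-1}^1 \sqrt{1-s^2}\,U_m(s)^2\uprd s\cdot 2\pi = \pi^2$. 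Hence the prefactor $\frac1\pi$ yields unit norm, and orthogonality follows from that of the $U_m$ with respect to the weight $\sqrt{1-s^2}$. I will recheck these normalisation constants carefully, since $\sigma_m$ depends on them.

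\textbf{Step 2: $\mathcal R v_{m,l}=\sigma_m u_{m,l}$.} This is the main step (Marr's formula), and I would argue by degree counting rather than by direct integration.

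(a) Each $v_{m,l}$ is a polynomial in $(x,y)$ of degree $m$. It is a Zernike polynomial and is orthogonal in $L^2(\Omega)$ to all polynomials of degree $<m$.

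(b) For a polynomial $p$ of degree $m$ whose angular dependence is $e^{il\psi}$, the line integral over the chord $\{s\theta+t\theta^\perp:\abs t\le\sqrt{1-s^2}\}$ integrates odd powers of $t$ to zero. Even powers $t^{2j}$ contribute $\frac{2}{2j+1}(1-s^2)^{j+1/2}$. Therefore $\mathcal R p(s,\varphi)=w(s)\,q(s)\,e^{il\varphi}$ with $q$ a polynomial of degree at most $m$, using rotation covariance of $\mathcal R$.

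(c) By the adjoint formula, $\mathcal R^*u_{n,k}(x)=\frac1\pi\int U_n(x\cdot\theta)e^{ik\varphi}\uprd\varphi$, which is a polynomial of degree at most $n$. Hence for $n<m$ we get $\SP{\mathcal R v_{m,l}}{u_{n,k}}_{w^{-1}}=\SP{v_{m,l}}{\mathcal R^* u_{n,k}}=0$ by (a).

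Combining (b), (c) and the fact that $\{U_n\}_{n\le m}$ spans the polynomials of degree $\le m$, we conclude that $q$ is a multiple of $U_m$. Thus $\mathcal R v_{m,l}=c_m u_{m,l}$.

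The constant $c_m$ can be obtained from the values at $s\to1$, or from leading coefficients. Alternatively, evaluate at the simplest case $l=m$, where $v_{m,m}\propto (x+iy)^m$ and the chord integral is explicit. This should give $c_m=2\sqrt{\pi/(m+1)}$, and $c_m$ is independent of $l$, which explains the multiplicity $m+1$. The parity condition $m+l\in2\Z$ reflects the parity of $U_m$ matching that of $e^{il\varphi}$ under $(s,\varphi)\mapsto(-s,\varphi+\pi)$.

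\textbf{Step 3: completeness.} Polynomials are dense in $L^2(\Omega)$, and the $v_{m,l}$ with $m\le M$ span all polynomials of degree $\le M$ (dimension count). Hence $\{v_{m,l}\}$ is an orthonormal basis of $L^2(\Omega)=\ker(\mathcal R)^\perp$, and the kernel is trivial. Consequently $\overline{\mathrm{ran}(\mathcal R)}$ is the closed span of $\{\mathcal R v_{m,l}\}=\{\sigma_m u_{m,l}\}$, which completes the singular system.

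I expect the main obstacle to be Step 2: the degree and orthogonality argument, and especially pinning down $c_m$ with the paper's exact normalisations.
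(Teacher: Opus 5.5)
The paper gives no proof of this statement; it is quoted from \cite{Rieder_2003}, going back to \cite{Davidson_1983, Louis_1984}. Your argument is a valid self-contained proof, with one step left to carry out (see below), and it takes a different line from the standard derivations in the literature. Those rest on explicit special-function identities: chord integrals of Zernike polynomials, or Funk--Hecke for $\mathcal R^*u_{m,l}$. Your steps (a)--(c) instead force $\mathcal R v_{m,l}=c_{m,l}u_{m,l}$ by degree counting and orthogonality alone, leaving a single scalar to compute.

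Your normalisations are correct. We have $\int_{-1}^1\sqrt{1-s^2}\,U_m^2\uprd s=\pi/2$, and with $t=2r^2-1$ one gets $\int_0^1 r^{2\abs l+1}\bigl(P^{(0,\abs l)}_k(2r^2-1)\bigr)^2\uprd r=\frac{1}{2(m+1)}$, so both systems are orthonormal. In Step 3 you should add that $\mathcal R$ is bounded: Cauchy--Schwarz on each chord gives $\norm{\mathcal Rf}_{w^{-1}}\le 2\sqrt\pi\,\norm f$. This is what lets you pass from polynomials to all of $\mathrm L^2(\Omega)$, obtain $\norm{\mathcal R f}^2=\sum\sigma_m^2\abs{\SP{f}{v_{m,l}}}^2$, and hence a trivial kernel. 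You are also right to parametrise chords as $s\theta+t\theta^\perp$; the paper's displayed definition of $\mathcal R$ has $+t\sin\varphi$ where $-t\sin\varphi$ is meant.

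The step not actually carried out is the independence of $c_{m,l}$ from $l$. Evaluating at $l=m$ (where $k=\frac{m-\abs l}{2}=0$) only fixes $c_{m,\pm m}$ and cannot detect an $l$-dependent factor. Such factors genuinely depend on conventions: writing the radial part as $r^{\abs l}P^{(\abs l,0)}_k(1-2r^2)$, which gives the same orthonormal system up to sign, would produce a factor $(-1)^k$.

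The boundary limit you mention first settles all $l$ at once. By rotation covariance, $\mathcal Rv_{m,l}(s,\varphi)=e^{il\varphi}\int_{-w(s)}^{w(s)}v_{m,l}(s,t)\uprd t$, so by continuity
\[
\lim_{s\to 1}\frac{\mathcal R v_{m,l}(s,\varphi)}{w(s)}=2e^{il\varphi}\,v_{m,l}(1,0)=2\sqrt{\tfrac{m+1}{\pi}}\,e^{il\varphi},
\]
because $P^{(0,\beta)}_k(1)=1$ for every $\beta$. On the other side, $c_{m,l}u_{m,l}/w\to c_{m,l}\frac{m+1}{\pi}e^{il\varphi}$ since $U_m(1)=m+1$. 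Hence $c_{m,l}=2\sqrt{\pi/(m+1)}$ for every admissible $l$. This is exactly the claimed $\sigma_m$, with multiplicity $m+1$.
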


\section{Functional Calculus for Convolution }\label{sec:App:functCalc}
	In preparation for those numerical experiments, we fix an even, real function $h\in L^1(\R)\cap L^2(\R)$ and  consider the convolution operator
\begin{equation}\label{eqn:ConvOp}
		A: L^2(\R) \to L^2(\R),\quad x \mapsto x * h. 
\end{equation}

The convolution theorem implies
\[\mathcal F Ax = \mathcal F x \mathcal F                                                                                                                                                                                                                                                                                                                                                                                                                                                                                                                                                                                                                                                                                                                                                                                                                                                                                                                                                                                                                                                                                                                                                                                                                                                                                                                                                                                                                                                                                                                                                                                                                                                                                                                                                                                                                                                                                                                                                                                                                                                                                                                                                                                                                                                                                                                                                                                                                                                                                                                                                                                                                                                                                                                                                                                                                                                                                                                                                                                                                                                                                                                                                                                                                                                                                                                                                                                                                                                                                                                                                                                                                                                                                                                                                                                                                                                                                                                                                                                                                                                                                                                                                                                                                                                                                                                                                                                                                                                                                                                                                                                                                                                                                                                                                                                                                                                                                                                                                                                                                                                                                                                                                                                                                                                                       h.\]

To check for smoothness prerequisites, we need to define $\abs A$. As $A$ is non-compact, we restate functional calculus for non-compact operators. 
We denote by $\sigma(A)$ the spectrum of $A$ and by $B(K)$ the Banach space of measurable functions $K \to \R$ equipped with the supremum norm for some compact set $K$.
We denote $\mathrm{id}$ as the identity mapping and $\mathds{1}$ as the constant function $t \mapsto 1$.

\begin{thm}[\text{\cite[Satz VII.1.6] {Werner_2018}}]\label{thm:FunCalc}
	Let $H$ be a Hilbert space and $A \in \mathrm{L} (H)$ be self-adjoint. Then there exists a unique mapping
	\[\Phi: B(\sigma(A)) \to \mathrm L(H)\] with the following properties:
	\begin{enumerate}
		\item $\Phi(\mathrm{id}) = A$ and $\Phi(\mathds 1) = \mathrm{id}$,
		\item $\Phi$ is an involutary algebrahomomorphism, i.e.
		\begin{enumerate}
			\item $\Phi$ is linear,
			\item $\Phi$ is multiplicative $\Phi(f \cdot g) = \Phi(f) \circ \Phi(g)$,
			\item $\Phi$ is involutary $\Phi\left(\bar f\right) = \Phi(f)^*$,
		\end{enumerate}
		\item $\Phi$ is continuous,
		\item For any family $\left\{ f_n\right\} \in B(\sigma(A))$ with $\sup_n \norm{f_n} <\infty$ converging point-wise to $f \in B(\sigma(A))$ we have for all $x, y \in H$
		\[\SP{\Phi(f_n)x}{y} \to \SP{\Phi(f)x}{y}.\]
	\end{enumerate}
We set 
\begin{equation}
f(A) := \Phi(f)
\end{equation}
 and call $\Phi$ the functional calculus of functional $A$.
\end{thm}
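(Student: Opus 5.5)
The plan is the classical two-stage construction: first build the continuous functional calculus on $C(\sigma(A))$, then extend it to bounded measurable functions through spectral measures obtained from the Riesz representation theorem. The properties listed in Theorem~\ref{thm:FunCalc} are exactly those that force both steps to be canonical, so uniqueness is argued alongside existence.

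\emph{Step 1 (polynomials and continuous functions).} For a real polynomial $p$ set $\Phi(p) := p(A)$; this is clearly linear, multiplicative and involutary, with $\Phi(\mathrm{id}) = A$ and $\Phi(\mathds 1) = \mathrm{id}$. The key estimate is $\norm{p(A)} = \sup_{t\in\sigma(A)}\abs{p(t)}$. I would prove it via the spectral mapping theorem $\sigma(p(A)) = p(\sigma(A))$, obtained by factoring $p(A) - \lambda = c\prod_j (A - t_j)$, together with the fact that for the self-adjoint operator $p(A)$ the norm equals the spectral radius, i.e.\ $\norm{T^2} = \norm{T}^2$ iterated. Since $\sigma(A)\subset\R$ is compact, the Weierstrass theorem lets $\Phi$ extend isometrically to $C(\sigma(A))$, and all algebraic properties pass to the limit.

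\emph{Step 2 (extension to $B(\sigma(A))$).} For fixed $x,y\in H$, the map $f\mapsto \SP{\Phi(f)x}{y}$ is a bounded linear functional on $C(\sigma(A))$ of norm at most $\norm x\norm y$. The Riesz--Markov theorem gives a unique regular complex Borel measure $\mu_{x,y}$ with $\SP{\Phi(f)x}{y} = \int f\uprd\mu_{x,y}$. For $f\in B(\sigma(A))$, the expression $(x,y)\mapsto\int f\uprd\mu_{x,y}$ is then a bounded sesquilinear form, since $\mu_{x,y}$ depends sesquilinearly on $(x,y)$ and has total variation at most $\norm x\norm y$. By Lax--Milgram/Riesz, this form equals $\SP{\Phi(f)x}{y}$ for a unique bounded operator $\Phi(f)$ with $\norm{\Phi(f)}\le\norm f_\infty$; this gives linearity and continuity. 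Property 4 is immediate from dominated convergence applied to each $\mu_{x,y}$.

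\emph{Step 3 (algebraic properties and uniqueness).} The involution property follows from $\overline{\mu_{x,y}} = \mu_{y,x}$, which is inherited from the continuous case. The main obstacle is multiplicativity on all of $B(\sigma(A))$, because it does not follow from a single limit. I would handle it with a two-fold monotone-class (bootstrap) argument. First fix $g$ continuous and consider the set of $f\in B(\sigma(A))$ with $\Phi(fg) = \Phi(f)\Phi(g)$. This set contains $C(\sigma(A))$ and is closed under bounded pointwise limits by property 4, so it contains all bounded Borel functions, since these form the smallest such class containing the continuous functions. Then fix an arbitrary bounded Borel $f$ and repeat the argument in $g$. For uniqueness, any $\Phi'$ with properties 1--4 agrees with $\Phi$ on polynomials by properties 1 and 2, on $C(\sigma(A))$ by continuity and Weierstrass, and on all of $B(\sigma(A))$ by the same bounded-pointwise-closure argument using property 4. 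Here $B(K)$ has to be read as the bounded Borel functions, which I would make explicit.
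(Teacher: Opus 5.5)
The paper gives no proof of this statement: it is quoted from Werner's Satz VII.1.6, and only its uniqueness clause is used later, to identify $f(A)=\mathcal F^{-1}M_{f\circ\mathcal F h}\mathcal F$ for the convolution operator. Your proposal is correct and is the standard textbook proof of the measurable functional calculus (continuous calculus via $\norm{p(A)}=\sup_{\sigma(A)}\abs{p}$ and Weierstrass, spectral measures $\mu_{x,y}$ from Riesz--Markov, and a bounded-pointwise-closure argument for multiplicativity and uniqueness), which is presumably the route of the cited source.

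Two small points to tighten. First, if $B(\sigma(A))$ is to contain complex-valued functions, as the involution property suggests, Step~1 must use complex polynomials; then $p(A)$ is only normal, so the norm identity should come from $\norm{p(A)}^2=\norm{(\bar p p)(A)}$ rather than from self-adjointness. Second, the lemma that the smallest class containing $C(\sigma(A))$ and closed under uniformly bounded pointwise limits is exactly the bounded Borel functions needs its own short $\sigma$-algebra argument; your reading of $B(K)$ as the bounded Borel functions is the right one.
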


The spectrum of the convolution operator \eqref{eqn:ConvOp}  
is given by $\sigma(A) = \mathcal F(h)(\R)\subset \R$. 
Using again the multiplication operator $M$, the convolution operator has the representation
\begin{equation}\label{eqn:AfunCalc}
	A = \mathcal F^{-1} M_{\mathcal Fh}\mathcal F.
\end{equation}

The following theorem formulates the functional calculus of $A$.
\begin{thm}
	For $f \in B(\sigma(A))$, it holds
	\[f(A) = \mathcal F^{-1} M_{f\circ\mathcal Fh}\mathcal F.\]
	\begin{proof}
		We need to show that the conditions of Theorem~\ref{thm:FunCalc} are fulfilled.
		\begin{enumerate}
			\item It follows immediately from representation~\eqref{eqn:AfunCalc}.
			\item \begin{enumerate}
				\item The identity  
				$M_{\alpha f + \beta g} = \alpha M_f + \beta M_g$ 
				implies
				\[(\alpha f + \beta g)(A) = \mathcal F^{-1} (\alpha M_{f \circ \mathcal Fh} + \beta M_{g \circ \mathcal Fh})\mathcal F.\]
				The linearity follows from the linearity of $\mathcal F$.
				\item We have
				\[ 	(f \cdot g)(A) = \mathcal F^{-1}M_{(f\circ \mathcal Fh) \cdot (g \circ \mathcal Fh)}\mathcal F= \mathcal F^{-1}M_{f\circ \mathcal Fh}M_{g \circ \mathcal Fh}\mathcal F= f(A) \circ g(A)\,.\]
			
				\item From $M_h^* = M_{\bar h}$ it follows that
				\[					f(A)^* = (\mathcal F)^* M_{f\circ \mathcal F h}^* (\mathcal F^{-1})^*=\mathcal F^{-1} M_{\bar f \circ \mathcal Fh} \mathcal F= \bar f (A).\]
		
			\end{enumerate}
			\item Let $f \in B(\sigma(A))$. Then we obtain
			\[
				\norm{f(A)} = \sup_{\norm{x} \leq 1}\norm{f(A)x}_2=\sup_{\norm{x} \leq 1}\norm{\mathcal F^{-1}  M_{f \circ \mathcal Fh}\mathcal Fx}_2= \sup_{\norm{\tilde x}\leq 1}\norm{M_{f \circ \mathcal Fh}\tilde x}_2,
			\]
		
			as the Fourier transform is, up to a constant, an isometric isomorphism. Moreover, it holds
			\begin{align*}
				\norm{M_{f \circ \mathcal Fh}x}_2 &\leq \norm{f\circ\mathcal Fh}_\infty \norm{x }_2 \leq \norm{f}_\infty \norm{x}_2.
			\end{align*}
			\item Let $\left\{f_n\right\} $ be a family as desired in the theorem with $\sup_n\norm{f_n}_\infty<N$ and $f$ be the point-wise limit. For
			all $n\in \N$ and $x, y \in L^2(\R)$ it holds that 
			\[f_n(\mathcal Fh(t))x(t)\overline{y(t)}\in L^1(\R)\]
			and each of these functions is bounded by $N\abs{x(t)y(t)}\in L^1(\R)$. Thus,  the Lebesgue convergence theorem implies
			\begin{multline*}
				\SP{f_n(A)x}{y} = \SP{M_{f_n \circ \mathcal Fh}\mathcal Fx }{\mathcal F y}
				= \int_\R f_n(\mathcal Fh(t))\hat x(t) \overline{\hat y(t)}\, \mathrm dt\\
				\to \int_\R f(\mathcal Fh(t))\hat x(t) \overline{\hat y(t)}\, \mathrm dt
				= \SP{M_{f \circ \mathcal Fh}\mathcal Fx }{\mathcal F y}= \SP{f(A)x}{y}.
			\end{multline*}
			
			Here, we abbreviated $\mathcal F x = \hat x$ and $\mathcal Fy = \hat y$.
		\end{enumerate} 
	\end{proof}
\end{thm}

\subsection*{Acknowledgments}
We thank Dr. Patrick Vogel and the Magnetic Particle Imaging research group at the University of Würzburg who provided the MPI simulation software MFS5 \cite{SoftwareVogel} and offered technical support.

\end{document}